\makeatletter \@namedef{subjclassname@2000}{\emph{2020 Mathematics
Subject Classification}} \makeatother
\newtheorem{theorem}{Theorem}[section]
 \newtheorem{lemma}[theorem]{Lemma}
 \newtheorem{proposition}[theorem]{Proposition}
\numberwithin{equation}{section}
\newcommand{\dif}{\,\mathrm{d}}
\newcommand{\R}{\ensuremath{\mathbb{R}}}
\renewcommand{\S}{\ensuremath{\mathbb{S}}}
\newcommand{\N}{\ensuremath{\mathbb{N}}}
\newcommand{\Z}{\ensuremath{\mathbb{Z}}}
\begin{document}

\title{A new formula for the $L^p$ norm}

\pagestyle{plain}

\author[Q. Gu]{Qingsong Gu}
\address[Q. Gu]{Department of Mathematics, Nanjing University, Nanjing 210093, China} \email{\href{mailto: Qingsong Gu
<qingsonggu@nju.edu.cn>}{qingsonggu@nju.edu.cn} }

\author[P.-L. Yung]{Po-Lam Yung}
\address[P.-L. Yung]{Mathematical Sciences Institute, Australian National University, Canberra ACT 2601, Australia \quad {\it and} \quad Department of Mathematics, The Chinese University of Hong Kong, Shatin, Hong Kong}\email{\href{mailto: Po-Lam Yung
<PoLam.Yung@anu.edu.au>}{PoLam.Yung@anu.edu.au}} \email{\href{mailto: Po-Lam Yung
<plyung@math.cuhk.edu.hk>}{plyung@math.cuhk.edu.hk}}

\subjclass[2000]{Primary 26D10; Secondary 26A33,35A23,46E30,46E35} \keywords{BBM formula;  Maz\cprime ya-Shaposhnikova formula; Fractional Sobolev space; weak-$L^p$ space.}
\thanks {Yung is partially supported by a Future Fellowship FT200100399 from the Australian Research Council.}

\begin{abstract}
Recently, Brezis, Van Schaftingen and the second author \cite{BVY} established a new formula for the $\dot{W}^{1,p}$ norm of a function in $C^{\infty}_c(\R^N)$. The formula was obtained by replacing the $L^p(\R^{2N})$ norm in the Gagliardo semi-norm for $\dot{W}^{s,p}(\R^N)$ with a weak-$L^p(\R^{2N})$ quasi-norm and setting $s = 1$. This provides a characterization of such $\dot{W}^{1,p}$ norms, which complements the celebrated Bourgain-Brezis-Mironescu (BBM) formula \cite{BBM01}. In this paper, we obtain an analog for the case $s = 0$. In particular, we present a new formula for the $L^p$ norm of any function in $L^p(\R^N)$, which involves only the measures of suitable level sets, but no integration. This provides a characterization of the norm on $L^p(\R^N)$, which complements a formula by Maz\cprime ya and Shaposhnikova \cite{MSh}. As a result, by interpolation, we obtain a new embedding of the Triebel-Lizorkin space $F^s_{p,2}(\R^N)$ (i.e. the Bessel potential space $(I-\Delta)^{-s/2} L^p(\R^N)$), as well as its homogeneous counterpart $\dot{F}^s_{p,2}(\R^N)$, for $s \in (0,1)$, $p \in (1,\infty)$.
\end{abstract}

\dedicatory{In memory of Ka-Sing Lau}
\maketitle

%\bigskip

\section{Introduction}

The purpose of this paper is to prove a new characterization of the $L^p$ norm on $\R^N$, by lifting to the product space $\R^N \times \R^N$ and considering a weak-$L^p$ quasi-norm over there instead. Indeed, for a measurable function $F(x,y)$ on $\R^N \times \R^N$ and $1 \leq p < \infty$, we denote the weak-$L^p$ quasi-norm of $F$ by $[F]_{L^{p,\infty}(\R^N \times \R^N)}$, where
\begin{equation}\label{Mar}
[F]_{L^{p,\infty}(\R^N \times \R^N)} := \sup_{\lambda > 0} \left( \lambda^p \mathcal{L}^{2N} \{(x,y) \in \R^{2N} \colon |F(x,y)| \geq \lambda\}\right)^{1/p}
\end{equation}
and $\mathcal{L}^{2N}$ denotes the Lebesgue measure on $\R^{2N}$ (see e.g., \cite{CR,G}). Then our first result reads:

\begin{theorem}\label{thm1.1}
For every $N \in \N$, there exist constants $c_1 = c_1(N) > 0$ and $c_2 = c_2(N) > 0$, such that for all $1 \leq p <\infty$ and all $u \in L^p(\R^N)$,
\begin{equation}\label{eq5'}
c_1^{1/p} \|u\|_{L^p(\R^N)}
\leq \left[\frac{u(x)-u(y)}{|x-y|^{\frac{N}{p}}}\right]_{L^{p,\infty}(\R^N \times \R^N)}
\leq 2 c_2^{1/p} \|u\|_{L^p(\R^N)}.
\end{equation}
Moreover, for $1 \leq p < \infty$ and $u \in L^p(\R^N)$, if we write
\begin{equation} \label{eq:E}
E_{\lambda} := \left\{ (x,y) \in \R^N \times \R^N \colon x \neq y, \, \frac{|u(x)-u(y)|}{|x-y|^{\frac{N}{p}}} \geq \lambda \right\},
\end{equation}
then
\begin{equation}\label{eq7'}
\lim_{\lambda \rightarrow 0^+}\lambda^p \mathcal{L}^{2N}(E_{\lambda})
= {2}\kappa_N\|u\|^p_{L^p(\R^N)},
\end{equation}
where $\kappa_N := \pi^{N/2}/\Gamma(\frac{N}{2}+1)$ is the volume of the unit ball in $\R^N$.
\end{theorem}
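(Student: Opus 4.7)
The plan is to organize the proof into three pieces. I would establish the right-hand inequality in \eqref{eq5'} from a uniform upper bound on $\lambda^p \mathcal{L}^{2N}(E_\lambda)$, then prove the sharp limit \eqref{eq7'}, and observe that the left-hand inequality in \eqref{eq5'} is then immediate from \eqref{eq7'} together with $[F]_{L^{p,\infty}}^p = \sup_{\lambda > 0} \lambda^p \mathcal{L}^{2N}(E_\lambda) \geq \lim_{\lambda \to 0^+} \lambda^p \mathcal{L}^{2N}(E_\lambda)$. Both constants can be taken as $c_1 = c_2 = 2\kappa_N$.

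The upper bound should be a direct consequence of the triangle inequality $|u(x)-u(y)| \leq |u(x)| + |u(y)|$: one obtains
\begin{equation*}
E_\lambda \subseteq \bigl\{(x,y) : |u(x)| \geq \tfrac{\lambda}{2}|x-y|^{N/p}\bigr\} \cup \bigl\{(x,y) : |u(y)| \geq \tfrac{\lambda}{2}|x-y|^{N/p}\bigr\},
\end{equation*}
and by Fubini and the $x \leftrightarrow y$ symmetry the measure of this union is at most $2\int_{\R^N} \kappa_N(2|u(x)|/\lambda)^p \, dx = 2^{p+1} \kappa_N \|u\|_{L^p}^p/\lambda^p$, whence $\lambda^p \mathcal{L}^{2N}(E_\lambda) \leq 2^{p+1}\kappa_N\|u\|_{L^p}^p$.

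For the sharp limit \eqref{eq7'} I would first verify it on simple functions. Write $u = \sum_{k=1}^K a_k \mathbf{1}_{A_k}$ with distinct $a_k \in \R \setminus \{0\}$ and disjoint $A_k \subset \R^N$ of finite measure; set $a_0 = 0$ and $A_0 = \R^N \setminus \bigcup_{k=1}^K A_k$. Decomposing $E_\lambda$ into its intersections with the rectangles $A_j \times A_k$, the contribution from indices $j,k \geq 1$ with $j \neq k$ is bounded by the constant $|A_j||A_k|$ and thus disappears after multiplication by $\lambda^p$. The leading contribution comes from the mixed pairs $(A_0, A_k)$ and $(A_k, A_0)$ with $k \geq 1$: for small $\lambda$, the ball $B(x, (|a_k|/\lambda)^{p/N})$ about any $x \in A_k$ contains $\bigcup_{j \geq 1} A_j$, so the measure of its intersection with $A_0$ equals $\kappa_N|a_k|^p/\lambda^p - O(1)$. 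Integrating and summing over $k$ and the two orderings yields $\lambda^p \mathcal{L}^{2N}(E_\lambda) \to 2 \sum_k \kappa_N|a_k|^p|A_k| = 2\kappa_N \|u\|_{L^p}^p$, as desired.

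The step I anticipate being the main obstacle is extending this limit to an arbitrary $u \in L^p(\R^N)$. For this I would approximate $u$ in $L^p$ by simple functions $u_n$ and exploit the following triangle-inequality stability, valid for every $\eps \in (0,1)$:
\begin{equation*}
E_\lambda^u \subseteq E_{(1-\eps)\lambda}^{u_n} \cup E_{\eps\lambda}^{u-u_n}, \qquad E_\lambda^{u_n} \subseteq E_{(1-\eps)\lambda}^u \cup E_{\eps\lambda}^{u-u_n},
\end{equation*}
both coming from $u = u_n + (u - u_n)$. After multiplying by $\lambda^p$, applying the simple-function limit to the $u_n$-piece and the uniform upper bound to the $(u-u_n)$-piece, and sending first $n \to \infty$ and then $\eps \to 0^+$, both $\limsup_{\lambda \to 0^+}\lambda^p \mathcal{L}^{2N}(E_\lambda^u)$ and $\liminf_{\lambda \to 0^+}\lambda^p \mathcal{L}^{2N}(E_\lambda^u)$ are squeezed to $2\kappa_N \|u\|_{L^p}^p$, completing the proof.
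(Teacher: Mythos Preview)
Your proposal is correct and shares the paper's overall architecture: the upper bound via $|u(x)-u(y)|\le |u(x)|+|u(y)|$ is identical to the paper's argument for \eqref{eqeq5'}, and your extension step from a dense subclass to general $u\in L^p$ via the inclusions $E_\lambda^u \subseteq E_{(1-\eps)\lambda}^{u_n} \cup E_{\eps\lambda}^{u-u_n}$ (and the reverse), combined with the uniform upper bound on the error piece, is exactly the mechanism the paper uses in Case~2 with $\sigma$ playing the role of your $\eps$. The one genuine difference is the choice of dense subclass and the decomposition used there. The paper truncates $u_R=u\mathbf{1}_{B_R}$ and, for compactly supported $u_R$, symmetrizes to the half-space $H_\lambda=E_\lambda\cap\{|y|>|x|\}$, which forces $x\in B_R$ and lets one separate off the region $|y|\ge R$ where $u_R(y)=0$ and the $x$-slice becomes an explicit ball minus a set of measure at most $\kappa_N R^N$. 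You instead take simple functions and decompose $E_\lambda$ into rectangles $A_j\times A_k$, isolating the dominant pairs $A_k\times A_0$. Both are elementary and of the same spirit (the main contribution comes from pairs where one point lies in the zero set of $u$); the paper's route has the mild advantage that truncation is a canonical approximation and the base case covers all compactly supported $L^p$ functions at once. One small caveat in your sketch: the claim that for small $\lambda$ the ball $B(x,(|a_k|/\lambda)^{p/N})$ contains $\bigcup_{j\ge 1}A_j$ for \emph{every} $x\in A_k$ requires the $A_j$ to be bounded. Either restrict to simple functions with bounded support (still dense in $L^p$), or bypass the inclusion entirely by writing $|B(x,r)\cap A_0|=\kappa_N r^N-|B(x,r)\cap\bigcup_{j\ge1}A_j|\ge \kappa_N r^N-\sum_{j\ge1}|A_j|$, which already gives the needed $\kappa_N|a_k|^p/\lambda^p-O(1)$ lower bound uniformly in $x\in A_k$.
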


We remark that the power of $|x-y|$ in the denominator of the quantity in the middle of \eqref{eq5'} is the natural one dictated by dilation invariance. Furthermore, the main thrust of \eqref{eq5'} is in the first inequality. In fact, the second inequality has already been observed by e.g. Dominguez and Milman in \cite{DM}. On the other hand, the first inequality of \eqref{eq5'} is an easy consequence of \eqref{eq7'}, with $c_1(N) := 2 \kappa_N$ (because the supremum over $\lambda > 0$ always dominates the limit as $\lambda \rightarrow 0^+$). In addition, we emphasize that \eqref{eq7'} is not true unless we assume $u \in L^p(\R^N)$ to begin with; indeed, if $1 \leq p < \infty$ and $u$ is identically $1$, then $\mathcal{L}^{2N}(E_{\lambda}) = 0$ for every $\lambda > 0$, while $\|u\|_{L^p(\R^N)} = +\infty$. So the proof of \eqref{eq7'} is a little delicate, which we give in detail in Section~\ref{sec2}.

Our point of view of lifting to the product space $\R^N \times \R^N$ and using the weak-$L^p$ quasi-norm there is motivated by recent work of the second author with Ha\"{i}m Brezis and Jean Van Schaftingen \cite{BVY}, which established an analog of the above theorem for the Sobolev semi-norm $\|\nabla u\|_{L^p(\R^N)}$. The article \cite{BVY} in turn drew important inspiration from the BBM formula for the Sobolev space $W^{1,p}$, which first appeared in a celebrated paper \cite{BBM01} of Bourgain, Brezis and Mironescu. An analogue of the BBM formula for $L^p$ in place of $W^{1,p}$ was first obtained by Maz\cprime ya and Shaposhnikova \cite{MSh}. Our Theorem~\ref{thm1.1} can be thought of as a counterpart of the Maz\cprime ya-Shaposhnikova formula for the $L^p$ norm, in the same way that the main result in \cite{BVY} relates to the BBM formula for $W^{1,p}$.

To describe all these developments in more detail, let's introduce some notations. Let $\Omega$ be a  domain (i.e. an open, connected set) in $\R^N$. For $1 \leq p <\infty$ and $0 < s < 1$, the Gagliardo semi-norm of a function $u \in L^p(\Omega)$ is defined as
\begin{equation}\label{eq1}
|u|_{\dot{W}^{s,p}(\Omega)}:= \left( \int_{\Omega}\int_{\Omega} \frac{|u(x)-u(y)|^p}{|x-y|^{N+sp}}dxdy \right)^{1/p},
\end{equation}
where $|\cdot|$ in the denominator on the right hand side denotes the Euclidean norm on $\R^N$. (The dot above $W^{s,p}$ indicates that this semi-norm is homogeneous with respect to dilations.) This semi-norm is an important tool in the study of many partial differential equations, and has found numerous important applications (see e.g. \cite{BBM02,DPV,Maz}).

A well-known `defect' of this semi-norm is that $|u|_{\dot{W}^{s,p}(\Omega)}$ does not converge to the Sobolev semi-norm $\|\nabla u\|_{L^p(\Omega)}$ as $s\rightarrow 1^-$. Indeed, it is easy to see (c.f. \cite{BBM01}) that if $u$ is any smooth, non-constant function on a domain $\Omega \subset \R^N$, then $\|u\|^p_{\dot{W}^{s,p}(\Omega)} \rightarrow \infty$ as $s \rightarrow 1^-$ (see also \cite[Proposition 4]{Bre} for an extension to measurable $u$'s that are not necessarily smooth). This `defect' was addressed by Bourgain, Brezis and Mironescu in \cite{BBM01}: if $\Omega$ is a smooth, bounded domain in $\R^N$, then applying their Theorem~2 with
\begin{equation}%
\rho_s(x) = \frac{p(1-s) \mathbf{1}_{|x| \leq D}}{D^{p(1-s)} \omega_{N} |x|^{N-p(1-s)}}, \quad s \in (0,1)
\end{equation}%
where $D$ is the diameter of $\Omega$ and $\omega_{N}$ is the surface area of $\S^{N-1}$, we see that for $1 \leq p < \infty$ and $u\in W^{1,p}(\Omega) := \{u \in L^p(\Omega) \colon |\nabla u| \in L^p(\Omega)\}$, one has what is now known as the BBM formula:
\begin{equation}\label{eq2}
\lim\limits_{s\rightarrow1^-}(1-s) |u|^p_{\dot{W}^{s,p}(\Omega)}
= \frac{1}{p} k(p,N) \|\nabla u\|^p_{L^p(\Omega)},
\end{equation}
where
\begin{equation}\label{k}
k(p,N):=\int_{\S^{N-1}}|e \cdot \omega|^p d\omega
=\frac{2\Gamma((p+1)/2)\pi^{(N-1)/2}}{\Gamma((N+p)/2)}.
\end{equation}
Here $e\in \S^{N-1}$ is any fixed vector, $e\cdot \omega$ is the inner product of $e$ with $\omega$, and $d\omega$ is the surface measure on $\S^{N-1}$ induced from the Lebesgue measure on $\R^N$. See also D\'{a}vila \cite{Dav} for an extension to the space of functions of bounded variation on $\Omega$.

On the other hand, for $s \in (0,1)$ and $1 \leq p < \infty$, let $W^{s,p}_0(\R^N)$ be the completion of $C^{\infty}_c(\R^N)$ under the Gagliardo semi-norm $|\cdot|_{\dot{W}^{s,p}(\R^N)}$. Parallel to the BBM formula \eqref{eq2}, Maz\cprime ya and Shaposhnikova \cite{MSh} showed that for any $u\in \bigcup_{0<s<1} W_0^{s,p}(\R^N)$, we have
\begin{equation}\label{MSh}
\lim\limits_{s\rightarrow0^+}s\|u\|^p_{\dot{W}^{s,p}(\R^N)}
=\frac{2N}{p} \kappa_N \|u\|^p_{L^p(\R^N)},
\end{equation}
where $\kappa_N$ is the volume of the unit ball in $\R^N$.

Recently, Brezis, Van Schaftingen and the second author \cite{BVY} considered what happened when one replaces the $L^p$ norm on $\R^N \times \R^N$ in the Gagliardo semi-norm $|\cdot|_{\dot{W}^{s,p}(\R^N)}$ by the weak-$L^p$ quasi-norm, and evaluates it at $s = 1$. This leads to the following characterization of $\|\nabla u\|_{L^p(\R^N)}$ in \cite[Theorem 1.1]{BVY}: they proved the existence of two positive constants $c=c(N)$ and $C=C(N)$ such that for all $u \in C_c^\infty(\R^N)$ and $1 \leq p < \infty$,
\begin{equation}\label{eq5}
c^p \|\nabla u\|^p_{L^p(\R^N)}
\leq \left[\frac{u(x)-u(y)}{|x-y|^{\frac{N}{p}+1}}\right]^p_{L^{p,\infty}(\R^N \times \R^N)}
\leq C\|\nabla u\|^p_{L^p(\R^N)}.
\end{equation}
Furthermore, it was shown that for $u \in C^{\infty}_c(\R^N)$ and $1 \leq p < \infty$, if
\begin{equation}%
\tilde{E}_{\lambda} := \left\{(x,y) \in \R^N \times \R^N \colon x \neq y, \, \frac{|u(x)-u(y)|}{|x-y|^{\frac{N}{p}+1}}\geq\lambda \right\},
\end{equation}%
then
\begin{equation}\label{eq7}
\lim_{\lambda\rightarrow\infty}\lambda^p \mathcal{L}^{2N}(\tilde{E}_{\lambda})
=\frac{1}{N} k(p,N) \|\nabla u\|^p_{L^p(\R^N)}.
\end{equation}
Thus, the first inequality in \eqref{eq5}, with $c(N):=\inf_{p \in [1,\infty)} (k(p,N)/N)^{1/p} > 0$, is a direct consequence of \eqref{eq7} and \eqref{Mar}. See also Poliakovsky \cite[Lemma 3.1]{Po} for an extension of the second inequality in \eqref{eq5} to functions $u \in W^{1,p}(\R^N) := \{u \in L^p(\R^N) \colon |\nabla u| \in L^p(\R^N)\}$.

In light of the formula \eqref{MSh} of Maz\cprime ya and Shaposhnikova mentioned above, which establishes an analog of the BBM formula \eqref{eq2} when $s \rightarrow 0^+$, a natural question is whether one has an analog of \eqref{eq5} and \eqref{eq7} for $L^p$ instead of $W^{1,p}$. Our Theorem~\ref{thm1.1} can be thought of as an affirmative answer to this question. Our proof is technically simpler than the corresponding one for \eqref{eq5} and \eqref{eq7} in \cite{BVY}, in that our proof relies only on Fubini's theorem, but not on any covering lemma nor any Taylor expansion. On the other hand, it came as a mild surprise that while \eqref{eq7} involves a limit as $\lambda \rightarrow +\infty$, its cousin \eqref{eq7'} involves instead a limit where $\lambda \rightarrow 0^+$: the former is natural since large values of $\lambda$ captures what happens to $|u(x)-u(y)|$ when $x$ and $y$ are close to each other, which in turn relates to the size of $|\nabla u(x)|$, but we do not have a good explanation of the latter.

We next turn to two results obtained by interpolating the upper bound in \eqref{eq5'}, with the upper bound in \eqref{eq5}. The first result can be formulated using the Bessel potential spaces $(I-\Delta)^{-s/2} L^p(\R^N)$:

\begin{theorem} \label{thm:1.2'}
For every $N \in \N$ and $p \in (1,\infty)$, there exists a constant $C' = C'(p,N)$ such that for all $s \in (0,1)$ and all $u \in (I-\Delta)^{-s/2} L^p(\R^N) $, we have
\begin{equation}%
\left[\frac{u(x)-u(y)}{|x-y|^{\frac{N}{p}+s}} \right]_{L^{p,\infty}(\R^N \times \R^N)} \leq C'  \|(I-\Delta)^{s/2} u\|_{L^p(\R^N)}.
\end{equation}%
\end{theorem}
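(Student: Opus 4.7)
The plan is to obtain Theorem~\ref{thm:1.2'} by Stein's complex interpolation theorem, interpolating between the $s=0$ endpoint given by the upper bound in Theorem~\ref{thm1.1} and the $s=1$ endpoint given by \eqref{eq5} (extended to $u\in W^{1,p}(\R^N)$ by Poliakovsky \cite{Po}). To absorb the $s$-dependence on the source side into a single analytic family of operators acting on $L^p(\R^N)$, I precompose with the Bessel potential $(I-\Delta)^{-z/2}$ and use the identification $(I-\Delta)^{-1/2}L^p(\R^N)=W^{1,p}(\R^N)$ (valid for $1<p<\infty$, with equivalent norms).

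Concretely, for $z$ in the closed strip $\{0\le\Re z\le 1\}$ and $f\in\mathcal{S}(\R^N)$, set
\begin{equation*}
S_z f(x,y) := \frac{\bigl((I-\Delta)^{-z/2}f\bigr)(x)-\bigl((I-\Delta)^{-z/2}f\bigr)(y)}{|x-y|^{\frac{N}{p}+z}}.
\end{equation*}
For fixed $f$ this is analytic in $z$ on the open strip, continuous on its closure, and of admissible growth. On $\Re z=0$, the imaginary power $(I-\Delta)^{-it/2}$ is a Mihlin--H\"ormander Fourier multiplier whose $L^p(\R^N)$ operator norm grows at most polynomially in $|t|$; paired with the right-hand inequality of \eqref{eq5'} this yields, for some $M=M(N)$,
\begin{equation*}
\bigl[S_{it}f\bigr]_{L^{p,\infty}(\R^{2N})}\le 2c_2^{1/p}\,\norm[L^p(\R^N)]{(I-\Delta)^{-it/2}f}\le C_p(1+|t|)^{M}\norm[L^p(\R^N)]{f}.
\end{equation*}
On $\Re z=1$, I factor $(I-\Delta)^{-(1+it)/2}=(I-\Delta)^{-1/2}\circ(I-\Delta)^{-it/2}$ and use that $(I-\Delta)^{-1/2}$ maps $L^p(\R^N)$ boundedly into $W^{1,p}(\R^N)$, so that $h_t:=(I-\Delta)^{-(1+it)/2}f\in W^{1,p}(\R^N)$ with $\norm[W^{1,p}(\R^N)]{h_t}\le C_p'(1+|t|)^M\norm[L^p(\R^N)]{f}$; Poliakovsky's extension of \eqref{eq5} to $W^{1,p}$ then gives
\begin{equation*}
\bigl[S_{1+it}f\bigr]_{L^{p,\infty}(\R^{2N})}\le C_p''(1+|t|)^{M}\norm[L^p(\R^N)]{f}.
\end{equation*}

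Because $p>1$, the quasi-norm $[\,\cdot\,]_{L^{p,\infty}(\R^{2N})}$ is equivalent to a genuine Banach norm, so Stein's interpolation theorem applies with $L^{p,\infty}(\R^{2N})$ as a fixed Banach target. Evaluating at $z=s\in(0,1)$ delivers $[S_s f]_{L^{p,\infty}(\R^{2N})}\le C'(p,N)\norm[L^p(\R^N)]{f}$ for every $f\in\mathcal{S}(\R^N)$. Substituting $f=(I-\Delta)^{s/2}u$, so that $u=(I-\Delta)^{-s/2}f$ and $S_sf(x,y)=(u(x)-u(y))/|x-y|^{N/p+s}$, yields the asserted inequality on the dense subset $(I-\Delta)^{-s/2}\mathcal{S}(\R^N)$ of $(I-\Delta)^{-s/2}L^p(\R^N)$; a routine approximation together with Fatou's lemma applied to the distribution function of $(u(x)-u(y))/|x-y|^{N/p+s}$ promotes the bound to the full Bessel potential space. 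The main obstacle is reconciling the weak-type target with Stein's framework: the hypothesis $p>1$ is precisely what makes $L^{p,\infty}(\R^{2N})$ normable, so no quasi-Banach refinement of Stein's theorem is required, and the polynomial growth of the endpoint bounds in $|t|$ trivially satisfies his admissibility condition.
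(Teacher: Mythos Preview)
Your proposal is correct and follows essentially the same route as the paper: both interpolate between the $s=0$ endpoint (the upper bound in Theorem~\ref{thm1.1}) and the $s=1$ endpoint (the upper bound in \eqref{eq5}, extended to $W^{1,p}=(I-\Delta)^{-1/2}L^p$ by Poliakovsky) via complex interpolation of the family $T_z u(x,y)=(u(x)-u(y))/|x-y|^{N/p+z}$. Your precomposition with $(I-\Delta)^{-z/2}$ to reduce to a fixed $L^p$ source and invoke Stein's theorem is exactly the standard way to make the paper's one-line appeal to ``complex interpolation'' precise, and your remarks on the normability of $L^{p,\infty}$ for $p>1$ and the polynomial growth of the imaginary-power multipliers correctly address the technical points the paper leaves implicit.
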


This theorem follows from complex interpolation by considering the following holomorphic family of linear operators
\begin{equation} \label{eq:Tzdef}
u(x) \mapsto T_z u(x,y) := \frac{u(x)-u(y)}{|x-y|^{\frac{N}{p}+z}}
\end{equation}
where $z \in \mathbb{C}$ takes value in the strip $\{0 \leq \text{Re}\, z \leq 1\}$. Indeed, the second inequality in \eqref{eq5'} shows that when $\textrm{Re}\, z = 0$, $T_z$ maps $L^p(\R^N)$ to $L^{p,\infty}(\R^N \times \R^N)$. On the other hand, as observed by Poliakovsky \cite[Lemma 3.1]{Po}, the second inequality in \eqref{eq5} continues to hold for all $u \in W^{1,p}(\R^N) = (I-\Delta)^{-1/2} L^p(\R^N)$.  Thus when $\textrm{Re}\, z = 1$, $T_z$ maps $(I-\Delta)^{-1/2} L^p(\R^N)$ to $L^{p,\infty}(\R^N \times \R^N)$. Theorem~\ref{thm:1.2'} now follows by complex interpolation.

One drawback of Theorem~\ref{thm:1.2'} is that the left-hand side concerns a homogeneous norm, while the right-hand side contains an inhomogeneous norm. But it is only slightly harder to prove a variant of Theorem~\ref{thm:1.2'}, concerning a homogeneous Triebel-Lizorkin space instead.

First, let's recast Theorem~\ref{thm:1.2'} in terms of (inhomogeneous) fractional Triebel-Lizorkin spaces $F^{s}_{p,q}$ on $\R^N$, which we define as follows. Let $\mathcal{S}(\R^N)$ be the Fr\'{e}chet space of Schwartz functions on $\R^N$, and $\mathcal{S}'(\R^N)$ the space of all tempered distributions on $\R^N$. Let $\mathcal{F}^{-1}$ be the inverse Fourier transform on $\R^N$ given by
\begin{equation}%
\mathcal{F}^{-1} \phi(x) = \int_{\R^N} \phi(\xi) e^{2\pi i x \cdot \xi} d\xi,
\end{equation}%
for $\phi \in \mathcal{S}(\R^N)$. Let $\varphi \in C^{\infty}_c(\R^N)$ be a fixed function supported on $\{|\xi| \leq 2\}$ such that $\varphi(\xi) = 1$ whenever $|\xi| \leq 1$. Write
\begin{equation} \label{psidef}
\psi(\xi) = \varphi(\xi) - \varphi(2\xi)
\end{equation}
so that $\psi \in C^{\infty}_c(\R^N)$ is supported on $\{1/2 \leq |\xi| \leq 2\}$ with
\begin{equation}%
\varphi(\xi) + \sum_{j \in \N} \psi(2^{-j} \xi) = 1 \quad \text{for all $\xi \in \R^N$}.
\end{equation}%
A corresponding family of Littlewood-Paley projections is given by
\begin{equation}%
P_0 u(x) := u * \mathcal{F}^{-1} \varphi (x)
\end{equation}%
and
\begin{equation}%
\Delta_j u(x) := u * \mathcal{F}^{-1} \psi_j (x),
\end{equation}%
where $\psi_j(\xi) := \psi(2^{-j} \xi)$.
%These Littlewood-Paley projections are well-defined for all $u \in \mathcal{S}'(\R^N)$; furthermore, for any $u \in \mathcal{S}'(\R^N)$, we have $P_0 u + \sum_{j \in \N} \Delta_j u \rightarrow u$ in the topology of $\mathcal{S}'(\R^N)$.
For $s \in \R$, $p \in (1,\infty)$ and $q \in (1,\infty)$, we define the \emph{(inhomogeneous) Triebel-Lizorkin space} $F^{s}_{p,q}(\R^N)$ to be the space of all $u \in \mathcal{S}'(\R^N)$ for which
\begin{equation}%
\|u\|_{F^{s}_{p,q}(\R^N)} := \Big\| \Big( |P_0 u|^q + \sum_{j \in \N} |2^{js} \Delta_j u|^q \Big)^{1/q} \Big\|_{L^p(\R^N)} < \infty.
\end{equation}%
Standard Littlewood-Paley theory shows that
for $1 < p < \infty$, $s \in \R$, we have
\begin{equation}%
F^{s}_{p,2}(\R^N) = (I-\Delta)^{-s/2} L^p(\R^N)
\end{equation}%
with comparable norms: for all $u \in \mathcal{S}'(\R^N)$, we have
\begin{equation}%
\|u\|_{F^{s}_{p,2}(\R^N)} \simeq_{p,N} \|(I-\Delta)^{s/2} u\|_{L^p(\R^N)}.
\end{equation}%
Thus we could have replaced the Bessel potential spaces $(I-\Delta)^{-s/2} L^p(\R^N)$ in Theorem~\ref{thm:1.2'} by the inhomogeneous $F^{s}_{p,2}(\R^N)$.

This motivates us to consider a variant of Theorem~\ref{thm:1.2'} for homogeneous Triebel-Lizorkin spaces instead. To introduce these spaces, we denote by $\mathcal{Z}(\R^N)$ the subspace of all $u \in \mathcal{S}(\R^N)$ for which $\int_{\R^N} u(x) p(x) dx = 0$ for every polynomial $p(x) \in \R[x]$, and denote by $\mathcal{Z}'(\R^N)$ the space of all continuous linear functionals on $\mathcal{Z}(\R^N)$, which we identify with the quotient $\mathcal{S}'(\R^N) / \{\text{polynomials on $\R^N$}\}$. If $\psi \in \mathcal{S}(\R^N)$ is as in \eqref{psidef} and $\psi_j(\xi) := \psi(2^{-j} \xi)$ for $j \in \Z$, then
\begin{equation}%
\sum_{j \in \Z} \psi_j(\xi) = 1 \quad \text{for all $\xi \in \R^N \setminus \{0\}$}.
\end{equation}%
We denote by $\{\Delta_j\}_{j \in \Z}$ the family of Littlewood-Paley projections given by
\begin{equation}%
\Delta_j u(x) := u * \mathcal{F}^{-1} \psi_j (x),
\end{equation}%
which is well-defined for all $u \in \mathcal{Z}'(\R^N)$  (because $\int_{\R^N} \mathcal{F}^{-1} \psi_j(x) p(x) dx = 0$ for all polynomials $p(x) \in \R[x]$.) The \emph{homogeneous Triebel-Lizorkin space} $\dot{F}^{s}_{p,q}(\R^N)$ is then defined to be the space of all $u \in \mathcal{Z}'(\R^N)$ for which
\begin{equation}%
\|u\|_{\dot{F}^{s}_{p,q}(\R^N)} := \Big\| \Big( \sum_{j \in \Z} |2^{js} \Delta_j u|^q \Big)^{1/q} \Big\|_{L^p(\R^N)} < \infty.
\end{equation}%

It was known (c.f. proof of Theorem in \cite[Chapter 5.1.5]{Triebel}) that $\mathcal{F}^{-1} [C^{\infty}_c(\R^N \setminus \{0\})]$, the space of (Schwartz) functions on $\R^N$ given by inverse Fourier transforms of $C^{\infty}$, compactly supported functions on $\R^N \setminus \{0\}$, is a dense subset of $\dot{F}^{s}_{p,q}(\R^N)$ for $s \in \R$, $p \in (1,\infty)$ and $q \in (1,\infty)$ (see Appendix below for a sketch of proof). Also, for $1 < p < \infty$, we have
\begin{equation}%
\|u\|_{\dot{F}^{s}_{p,2}(\R^N)} \simeq_{p,N}
\begin{cases}
\|u\|_{L^p(\R^N)} & \quad \text{if $s = 0$}\\
\|\nabla u\|_{L^p(\R^N)} & \quad \text{if $s = 1$},
\end{cases}
\end{equation}%
at least if $u \in \mathcal{F}^{-1} [C^{\infty}_c(\R^N \setminus \{0\})]$ (indeed this holds as long as $u \in \mathcal{S}'(\R^N)$ for which the right hand side of the above display equation is finite). This allows us to prove the next result, concerning the homogeneous space $\dot{F}^{s}_{p,2}(\R^N)$:
\begin{theorem}\label{th1.2}
For every $N \in \N$ and $p \in (1,\infty)$, there exists a constant $C'=C'(p,N)$ such that for all $s \in (0,1)$ and all $u \in \mathcal{F}^{-1} [C^{\infty}_c(\R^N \setminus \{0\})]$,
\begin{equation}\label{eq:new}
\left[\frac{u(x)-u(y)}{|x-y|^{\frac{N}{p}+s}} \right]_{L^{p,\infty}(\R^N \times \R^N)} \leq C'  \|u\|_{\dot{F}^{s}_{p,2}(\R^N)}.
\end{equation}
As a result, for $s \in (0,1)$ and $p \in (1,\infty)$, one may define the left-hand side of \eqref{eq:new} for all $u \in \dot{F}^{s}_{p,2}(\R^N)$ by density, and the inequality \eqref{eq:new} continues to hold.
\end{theorem}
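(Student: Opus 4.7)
The strategy is to carry out the same complex interpolation argument used in the proof of Theorem~\ref{thm:1.2'}, but with the homogeneous Riesz potential $(-\Delta)^{-z/2}$ in place of the Bessel potential $(I-\Delta)^{-z/2}$. For $f$ in the dense subspace $\mathcal{F}^{-1}[C^\infty_c(\R^N \setminus \{0\})]$ and $z$ in the closed strip $\{0 \leq \operatorname{Re} z \leq 1\}$, I would form the analytic family
$$S_z f(x,y) := T_z\bigl[(-\Delta)^{-z/2} f\bigr](x,y) = \frac{(-\Delta)^{-z/2} f(x) - (-\Delta)^{-z/2} f(y)}{|x-y|^{N/p + z}},$$
with $T_z$ as in \eqref{eq:Tzdef} and $(-\Delta)^{-z/2} f := \mathcal{F}^{-1}(|\xi|^{-z} \hat f)$. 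Since $|\xi|^{-z}$ is entire in $z$ and $C^\infty$ on $\R^N \setminus \{0\}$, the function $|\xi|^{-z} \hat f$ stays in $C^\infty_c(\R^N \setminus \{0\})$ for every $z$, so $(-\Delta)^{-z/2} f$ remains Schwartz and the map $z \mapsto S_z f$ is entire, giving an admissible analytic family in Stein's sense.

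For the endpoint at $\operatorname{Re} z = 0$, write $z = it$; the multiplier $|\xi|^{-it}$ satisfies Mikhlin's condition with $L^p$-norm of order $(1+|t|)^{C(p,N)}$, so combining with Theorem~\ref{thm1.1} gives
$$[S_{it} f]_{L^{p,\infty}(\R^N \times \R^N)} \lesssim (1+|t|)^{C} \|f\|_{L^p(\R^N)}.$$
For $\operatorname{Re} z = 1$, factor $(-\Delta)^{-(1+it)/2} = (-\Delta)^{-1/2} \circ (-\Delta)^{-it/2}$, so that each component of $\nabla[(-\Delta)^{-(1+it)/2} f]$ is a Riesz transform of $(-\Delta)^{-it/2} f$ and hence bounded on $L^p$ with the same polynomial $|t|$-growth. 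Combined with Poliakovsky's extension \cite[Lemma 3.1]{Po} of the upper bound in \eqref{eq5} to $W^{1,p}(\R^N)$,
$$[S_{1+it} f]_{L^{p,\infty}(\R^N \times \R^N)} \lesssim (1+|t|)^{C} \|f\|_{L^p(\R^N)}.$$

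Stein's complex interpolation theorem, applied to weak-$L^p$ targets by pairing $S_z f$ against atoms of the predual Lorentz space $L^{p',1}$ before running the three-lines lemma, then yields $[S_s f]_{L^{p,\infty}} \leq C'(p,N) \|f\|_{L^p}$ for every $s \in (0,1)$. For an arbitrary $u \in \mathcal{F}^{-1}[C^\infty_c(\R^N \setminus \{0\})]$, setting $f := (-\Delta)^{s/2} u$ (again in this class) identifies $S_s f$ with $T_s u$, while standard Littlewood--Paley theory gives $\|f\|_{L^p} \simeq_{p,N} \|u\|_{\dot F^s_{p,2}(\R^N)}$. This proves \eqref{eq:new} on the dense subspace, and the extension to all of $\dot F^s_{p,2}(\R^N)$ follows from the density statement recorded just before the theorem, using that $[\,\cdot\,]_{L^{p,\infty}}$ is equivalent to a genuine norm for $p > 1$.

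The step I expect to be most delicate is absorbing the polynomial $|t|$-growth from the imaginary-power multipliers into the admissibility envelope for Stein's theorem with weak-$L^p$ targets. This is a standard technical variant of the argument already underlying Theorem~\ref{thm:1.2'} and requires no new ideas beyond testing against $L^{p',1}$ atoms to convert the weak-$L^p$ bound into a bilinear pairing before invoking the three-lines lemma.
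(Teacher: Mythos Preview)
Your proposal is correct and follows essentially the same complex-interpolation route as the paper: the paper's analytic family $z \mapsto T_z\bigl[(-\Delta)^{(s-z)/2}u\bigr]$ is exactly your $S_z f$ with $f = (-\Delta)^{s/2}u$, and both arguments pair against $g \in L^{p',1}(\R^{2N})$ compactly supported away from the diagonal before running the three-lines lemma (dividing by $(z+1)^{N+1}$) to absorb the polynomial $|t|$-growth from the imaginary-power multipliers. The only cosmetic difference is that the paper packages your separate Mikhlin and Littlewood--Paley steps into a single lemma (Lemma~\ref{thm3.2}), which in particular makes explicit the $s$-uniformity of the constant in $\|(-\Delta)^{s/2}u\|_{L^p} \simeq_{p,N} \|u\|_{\dot F^s_{p,2}}$ that you are invoking at the end.
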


We note that Dominguez and Milman \cite[Theorem 4.1]{DM} had actually proved a stronger embedding, namely that the $\dot{F}^{s}_{p,2}$ norm above can be replaced by $\dot{F}^{s}_{p,\infty}$, but their constant might blow up as $s \to 0^+$, whereas ours remain bounded uniformly for all $0 < s < 1$.

Theorem~\ref{th1.2} is the most powerful in the case $1 < p < 2$, as one can see by comparing \eqref{eq:new} with the following known inequality for $\dot{F}^{s}_{p,p}(\R^N)$ (so $q = p$ as opposed to $q = 2$ in Theorem~\ref{th1.2}):
\begin{proposition}\label{th1.3}
For every $N \in \N$ and every $p \in (1,\infty)$, there exists a constant $C'' = C''(N,p)$ so that for all $u \in \mathcal{F}^{-1} [C^{\infty}_c(\R^N \setminus \{0\})]$, $s \in (0,1)$, one has
\begin{equation} \label{eq:known1}
\left\| \frac{u(x)-u(y)}{|x-y|^{\frac{N}{p}+s}} \right\|_{L^p(\R^N \times \R^N)} \leq C'' \Big[\frac{1}{s(1-s)}\Big]^{\max\{\frac{1}{2},\frac{1}{p}\}} \|u\|_{\dot{F}^{s}_{p,p}(\R^N)}.
\end{equation}
\end{proposition}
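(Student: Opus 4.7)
The plan is to estimate $|u|_{\dot{W}^{s,p}(\R^N)}$, which equals the left-hand side by the definition of the Gagliardo seminorm, through a Littlewood--Paley decomposition, tracking the resulting $s$-dependence carefully. Since $\|u\|_{\dot{F}^{s}_{p,p}(\R^N)}^p = \sum_{j\in\Z} 2^{jsp}\|\Delta_j u\|_{L^p(\R^N)}^p$ by Fubini, the target inequality becomes
\[
|u|_{\dot{W}^{s,p}(\R^N)}^p \;\leq\; C\,[s(1-s)]^{-\max\{p/2,\,1\}}\sum_{j\in\Z}2^{jsp}\|\Delta_j u\|_{L^p(\R^N)}^p.
\]
Writing $u(x)-u(y)=\sum_j(\Delta_j u(x)-\Delta_j u(y))$ and setting $h=y-x$, the left-hand side becomes $\int_{\R^N}|h|^{-N-sp}\|u(\cdot+h)-u(\cdot)\|_{L^p(\R^N)}^p\,dh$. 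The two elementary bounds on each frequency piece are the triangle inequality $\|\Delta_j u(\cdot+h)-\Delta_j u(\cdot)\|_{L^p}\leq 2\|\Delta_j u\|_{L^p}$ and, via the mean value inequality together with Bernstein's inequality for frequency-localised functions, $\|\Delta_j u(\cdot+h)-\Delta_j u(\cdot)\|_{L^p}\leq |h|\,\|\nabla\Delta_j u\|_{L^p}\leq C\,|h|\,2^j\|\Delta_j u\|_{L^p}$. Taking the minimum yields the key uniform-in-$h$ estimate
\[
\|\Delta_j u(\cdot+h)-\Delta_j u(\cdot)\|_{L^p(\R^N)} \;\leq\; C\min(|h|2^j,1)\,\|\Delta_j u\|_{L^p(\R^N)}.
\]

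For $1<p\leq 2$, I combine the Littlewood--Paley inequality $\|f\|_{L^p}\lesssim\|(\sum_j|\Delta_j f|^2)^{1/2}\|_{L^p}$ applied to $f=u(\cdot+h)-u(\cdot)$ with the pointwise embedding $\ell^p\hookrightarrow\ell^2$ (valid since $p\leq 2$) to obtain
\[
\|u(\cdot+h)-u(\cdot)\|_{L^p}^p\;\leq\;C\sum_{j\in\Z}\|\Delta_j u(\cdot+h)-\Delta_j u(\cdot)\|_{L^p}^p.
\]
After substituting the pointwise bound, the problem reduces to the elementary polar-coordinate identity $\int_{\R^N}\min(|h|2^j,1)^p|h|^{-N-sp}\,dh\leq C_N[s(1-s)]^{-1}2^{jsp}$, in which the $s$-singular factor arises from summing the two geometric-series tails at $|h|\sim 2^{-j}$. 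This delivers the exponent $1/p$.

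For $2\leq p<\infty$, I instead apply Minkowski's inequality in $L^{p/2}$ (valid since $p/2\geq 1$) inside the square function to obtain
\[
\|u(\cdot+h)-u(\cdot)\|_{L^p}\;\leq\;C\Big(\sum_{j\in\Z}\|\Delta_j u(\cdot+h)-\Delta_j u(\cdot)\|_{L^p}^2\Big)^{1/2}.
\]
One cannot now bring the $p/2$-th power inside the $j$-sum for free. Setting $b_j:=2^{js}\|\Delta_j u\|_{L^p}$ and $\alpha_j(h):=\min(|h|2^j,1)^2 2^{-2js}$, I would invoke Jensen's inequality for the convex map $t\mapsto t^{p/2}$ with weights $\alpha_j(h)$, producing $(\sum_j\alpha_j b_j^2)^{p/2}\leq(\sum_j\alpha_j)^{p/2-1}\sum_j\alpha_j b_j^p$. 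An analogous geometric-series computation gives $\sum_j\alpha_j(h)\leq C|h|^{2s}[s(1-s)]^{-1}$ and $\int_{\R^N}\alpha_j(h)|h|^{-N-2s}\,dh\leq C[s(1-s)]^{-1}$ uniformly in $j$; multiplying these and integrating over $h$ collapses to an overall factor $[s(1-s)]^{-p/2}$, hence the exponent $1/2$ upon taking $p$-th roots.

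The main obstacle will be the case $p\geq 2$: unlike the symmetric $p\leq 2$ case, the $\ell^2$ norm coming from the square function cannot be freely exchanged with the $\ell^p$ norm appearing in the Triebel--Lizorkin characterisation, and the Jensen device above is introduced precisely to bridge this gap without degrading the sharp $[s(1-s)]^{-1/2}$ dependence near either endpoint $s\to 0^+$ or $s\to 1^-$. Everything else---Bernstein's inequality, polar integration, summation of geometric series---is routine but must be carried out attentively so that all $s$-uniform constants consolidate into the single factor $s(1-s)$.
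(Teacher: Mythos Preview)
Your proposal is correct and follows essentially the same strategy as the paper's proof: Littlewood--Paley decomposition, the same two-regime estimate $\|\Delta_j u(\cdot+h)-\Delta_j u(\cdot)\|_{L^p}\lesssim \min(|h|2^j,1)\|\Delta_j u\|_{L^p}$, and the same split into $1<p\le 2$ (using $\ell^p\hookrightarrow\ell^2$) versus $2\le p<\infty$ (using Minkowski in $L^{p/2}$). The only noteworthy differences are organisational: the paper first discretises the translation variable into dyadic shells $|z|\simeq 2^{-k}$ and then, in the case $p\ge 2$, applies Minkowski in $\ell^{p/2}_k$ to pull the $j$-sum outside, whereas you keep $h$ continuous throughout and, for $p\ge 2$, use the Jensen/H\"older device $(\sum_j\alpha_j b_j^2)^{p/2}\le(\sum_j\alpha_j)^{p/2-1}\sum_j\alpha_j b_j^p$ pointwise in $h$ before integrating. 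Both routes yield the same $[s(1-s)]^{-1/2}$ factor; your continuous treatment is slightly more direct, while the paper's dyadic bookkeeping makes the index shift $j\mapsto j+k$ (and hence the translation invariance of $\|u\|_{\dot F^s_{p,p}}$) more transparent.
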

The left hand side of \eqref{eq:new} is smaller than the left hand side of \eqref{eq:known1} by Chebyshev's inequality, but the norm on the right hand side of \eqref{eq:new} is also smaller than the norm on the right hand side of \eqref{eq:known1} if $1 < p < 2$ (because $\|u\|_{\dot{F}^{s}_{p,2}} \leq \|u\|_{\dot{F}^{s}_{p,p}}$ if $p < 2$). In addition, the constant $C'$ in \eqref{eq:new} does not blow up if we fix $p$ and let $s \rightarrow 0^+$ or $1^-$.

The proof of Theorem \ref{th1.2} will be given in Section~\ref{sec3}. For the convenience of the reader, we will also give a proof of Proposition \ref{th1.3}, which we adapt from \cite[Chapter V.5]{S70}. We also remark in passing that it is also known that if $p \in [2,\infty)$, then the right hand side of \eqref{eq:known1} can also be replaced by $C''(N,p) [s(1-s)]^{-1/p} \|u\|_{\dot{F}^{s}_{p,2}(\R^N)}$. In other words, if we control $\|u\|_{\dot{F}^{s}_{p,2}(\R^N)}$ (which is typically bigger than the norm $\|u\|_{\dot{F}^{s}_{p,p}(\R^N)}$ that appears in \eqref{eq:known1}), then to control the left hand side of \eqref{eq:known1}, we only need to pay a price of a smaller constant $C''(N,p) [s(1-s)]^{-1/p}$ (as opposed to $C''(N,p) [s(1-s)]^{-1/2}$). This follows from an adaptation of the arguments given for Proposition~\ref{th1.3}, which we will not give in detail.

An interesting related question is whether the inequality in \eqref{eq:new} can be reversed. Since $\dot{F}^{s}_{p,2}(\R^N) = [L^p(\R^N), \dot{W}^{1,p}(\R^N)]_s$, this question could be reformulated as follows: If $N \in \N$, $p \in (1, \infty)$, $s \in (0,1)$ and $u \in \mathcal{F}^{-1} [C^{\infty}_c(\R^N \setminus \{0\})]$, is there a holomorphic family of functions $\{u_z(x) \colon z \in \mathbb{C}, 0 \leq \text{Re}\, z \leq 1 \}$ so that $u_s(x) = u(x)$, and so that
\begin{equation}%
\max \left\{ \sup_{\text{Re}\, z = 0} \|u_z\|_{L^p(\R^N)}, \sup_{\text{Re}\, z = 1} \|\nabla u_z\|_{L^p(\R^N)} \right\} \lesssim \left[ \frac{u(x)-u(y)}{|x-y|^{\frac{N}{p}+s}} \right]_{L^{p,\infty}(\R^N \times \R^N)}?
\end{equation}%

\noindent{\textbf{Acknowledgements.}} The authors thank Ha\"{i}m Brezis and Jean Van Schaftingen for their kind encouragement as we pursued this project, and Armin Schikorra for sharing his thoughts related to Proposition~\ref{th1.3}. They also thank Ka-Sing Lau for his teaching and inspiration over the years.

\section{Proof of Theorem \ref{thm1.1}}\label{sec2}
\begin{proof}
As remarked above, the second inequality in \eqref{eq5'} is essentially known. It was stated without proof in \cite{DM}. But for completeness, and also because we need to use it to derive the first inequality in \eqref{eq5'}, we give its simple proof below. Indeed, we show that for $1 \leq p < \infty$ and all measurable functions $u$ on $\R^N$,
\begin{equation}\label{eqeq5'}
\left[\frac{u(x)-u(y)}{|x-y|^{\frac{N}{p}}}\right]^p_{L^{p,\infty}(\R^N\times \R^N)}\leq 2^{p+1}\kappa_N\|u\|^p_{L^p(\mathbb R^N)}
\end{equation}
so that the second inequality of \eqref{eq5'} holds with $c_2(N) := 2 \kappa_N$, where $\kappa_N$ is the volume of the unit ball in $\R^N$.

To prove \eqref{eqeq5'}, given $1 \leq p < \infty$, a measurable $u$ on $\R^N$, and $\lambda > 0$, let $E_{\lambda}$ be as in \eqref{eq:E}. Then by the triangle inequality,
\begin{equation}%
\begin{split}
E_{\lambda} \subset & \left\{(x,y)\in \mathbb R^N\times\mathbb R^N \colon x\neq y, \, |u(x)|\geq\frac12\lambda|x-y|^{N/p}\right\} \\
& \quad \quad \bigcup \left\{(x,y)\in \mathbb R^N\times\mathbb R^N \colon x\neq y, \, |u(y)|\geq\frac12\lambda|x-y|^{N/p}\right\}
\end{split}
\end{equation}%
so
\begin{equation}%
\begin{split}
\mathcal L^{2N}(E_\lambda)
%&=\mathcal L^{2N}\left(\left\{(x,y)\in \mathbb R^N\times\mathbb R^N \colon x\neq y, \, \frac{|u(x)-u(y)|}{|x-y|^{N/p}}\geq\lambda\right\}\right) \\
%&\leq  \mathcal L^{2N}\left(\left\{(x,y)\in \mathbb R^N\times\mathbb R^N \colon x\neq y, \, |u(x)|\geq\frac12\lambda|x-y|^{N/p}\right\}\right) \\
%&\quad \quad +\mathcal L^{2N}\left(\left\{(x,y)\in \mathbb R^N\times\mathbb R^N \colon x\neq y, \, |u(y)|\geq\frac12\lambda|x-y|^{N/p}\right\}\right) \\
&\leq \int_{\mathbb R^N}\int_{\mathbb R^N} \mathbf{1}_{\left\{(x,y) \colon |y-x| \leq (2|u(x)|\lambda^{-1})^{p/N}\right\}}dydx \\
&\quad \quad +\int_{\mathbb R}\int_{\mathbb R^N} \mathbf{1}_{\left\{(x,y) \colon |y-x|\leq (2|u(y)|\lambda^{-1})^{p/N}\right\}}dxdy \\
&=\kappa_N\int_{\mathbb R^N}(2\lambda^{-1})^{p}|u(x)|^pdx+\kappa_N\int_{\mathbb R^N}(2\lambda^{-1})^{p}|u(y)|^pdy \\
&=2^{p+1}\kappa_N\lambda^{-p}\|u\|_{L^p(\mathbb R^N)}^p.
\end{split}
\end{equation}%
\eqref{eqeq5'} now follows by multiplying by $\lambda^p$ on both sides and taking supremum over all $\lambda > 0$.

It remains to establish \eqref{eq7'} for all $u \in L^p(\R^N)$, $1 \leq p < \infty$, which would then imply the first inequality in \eqref{eq5'}. We first consider the case under the additional assumption that $u$ is compactly supported on $\R^N$. This extra assumption about $u$ will then be removed by using suitable truncations of $u$, together with \eqref{eqeq5'} which handles the error that arises.

\noindent\textbf{Case 1. $u$ is compactly supported.}
For $\lambda > 0$, let $E_{\lambda}$ be as in \eqref{eq:E}. Then
\begin{equation}\label{eq''}
\mathcal L^{2N}(E_{\lambda})=2\mathcal L^{2N}(H_{\lambda})
\end{equation}
where
\begin{equation}
H_{\lambda} := E_{\lambda} \cap \{(x,y) \in \R^N \times \R^N \colon |y|>|x|\}.
\end{equation}
This is because $E_{\lambda}$ is the union of its three subsets, one where $|y| > |x|$, one where $|y| < |x|$, and one where $|y| = |x|$. The last set has $\mathcal{L}^{2N}$ measure zero, and the first two sets have the same $\mathcal{L}^{2N}$ measure by symmetry of the set $E_{\lambda}$. Hence we only need to estimate $\mathcal{L}^{2N}(H_\lambda)$.

Since $u$ is compactly supported, we may assume
\begin{equation}%
\text{supp} \, u \subseteq B_R := \{x \in \R^N \colon |x| < R\}
\end{equation}%
for some $R > 0$. Now if $(x,y) \in H_{\lambda}$, then we must have $x \in B_R$. This is because otherwise both $x,y$ are outside $B_R$, which by our assumption about the support of $u$ implies that $u(x)=u(y)=0$, and hence $(x,y) \notin E_{\lambda}$, contradicting that $(x,y) \in H_{\lambda}$. Moreover, for $x \in B_R$, let
\begin{equation}%
H_{\lambda,x} :=\left\{y \in \R^N \colon |y|>|x|, \,\frac{|u(y)-u(x)|}{|y-x|^{N/p}}\geq\lambda\right\}
\end{equation}%
and
\begin{equation}%
H_{\lambda,x,R} := \left\{y \in \R^N \colon |y| \geq R, \, |y-x|\leq \left(\frac{|u(x)|}{\lambda}\right)^{p/N}\right\}.
\end{equation}%
Then Fubini's theorem gives
\begin{equation} \label{eq:Fub}
\mathcal{L}^{2N}(H_{\lambda}) = \int_{B_R} \mathcal{L}^N(H_{\lambda,x}) dx,
\end{equation}
while
\begin{equation}%
H_{\lambda,x,R} = H_{\lambda,x} \setminus B_R,
\end{equation}%
because for $|y| \geq R$, we have $u(y) = 0$ and hence $|u(x)| = |u(y)-u(x)|$. It follows that
\begin{equation} \label{eq:Hinclusion}
H_{\lambda,x,R} \subseteq H_{\lambda,x} \subseteq H_{\lambda,x,R} \cup B_R.
\end{equation}
Writing $\kappa_N = \mathcal{L}^N(B_1)$, from the first inclusion in \eqref{eq:Hinclusion}, we have
\begin{equation}\label{eqeqeq3}
\mathcal{L}^N(H_{\lambda,x})
\geq \mathcal{L}^N(H_{\lambda,x,R})
\geq \kappa_N \frac{|u(x)|^p}{\lambda^p} - \kappa_N R^N.
\end{equation}
On the other hand, from the second inclusion in \eqref{eq:Hinclusion}, we have
\begin{equation}\label{eqeqeq3'}
\mathcal{L}^N(H_{\lambda,x})
\leq \kappa_N \frac{|u(x)|^p}{\lambda^p} + \kappa_N R^N.
\end{equation}
Integrating \eqref{eqeqeq3} and \eqref{eqeqeq3'} over $x \in B_R$, and using \eqref{eq:Fub}, we obtain
\begin{equation}%
\frac{\kappa_N}{\lambda^p} \|u\|^p_{L^p(\R^N)} - \kappa_N^2 R^{2N} \leq \mathcal{L}^{2N}(H_{\lambda}) \leq \frac{\kappa_N}{\lambda^p} \|u\|^p_{L^p(\R^N)} + \kappa_N^2 R^{2N}.
\end{equation}%
Multiplying both sides by $\lambda^p$ and letting $\lambda \rightarrow 0^+$, we have
\begin{equation}%
\lim_{\lambda \rightarrow 0^+} \lambda^p \mathcal{L}^{2N}(H_{\lambda}) = \kappa_N \|u\|^p_{L^p(\R^N)},
\end{equation}%
as desired.

\bigskip

\noindent\textbf{Case 2. $u \in L^p(\R^N)$, not necessarily compactly supported.} Let $u_R = u \cdot \mathbf{1}_{B_R}$ be the truncation of $u$ with $|x|\leq R$ for some $R>0$. Let $v_R=u-u_R$. Later we will crucially use that $\|v_R\|_{L^p(\mathbb R^N)}\rightarrow0$ as $R \rightarrow \infty$, which holds only because $u \in L^p(\R^N)$ and $1 \leq p < \infty$.

Now since $u = u_R + v_R$, for any $\sigma\in(0,1)$, we have
\begin{equation} \label{eq:Edec1}
E_{\lambda} = \left\{(x,y) \in \R^N \times \R^N \colon \frac{|u(x)-u(y)|}{|x-y|^{N/p}}\geq\lambda\right\} \subseteq A_1 \cup A_2
\end{equation}
where
\begin{equation}
A_1 := \left\{(x,y) \in \R^N \times \R^N \colon \frac{|u_R(x)-u_R(y)|}{|x-y|^{N/p}}\geq\lambda(1-\sigma)\right\}
\end{equation}
and
\begin{equation} \label{eq:A2}
A_2 := \left\{(x,y) \in \R^N \times \R^N \colon \frac{|v_R(x)-v_R(y)|}{|x-y|^{N/p}}\geq {\lambda\sigma}\right\}.
\end{equation}
Hence
\begin{equation}\label{eqeqeq17'}
\mathcal L^{2N}(E_\lambda)\leq \mathcal L^{2N}(A_1)+\mathcal L^{2N}(A_2).
\end{equation}
Since $u_R$ is compactly supported in $B_R$, by \eqref{eqeqeq3'} with $\lambda$ replaced by $\lambda(1-\sigma)$, we obtain
\begin{equation}\label{eqeqeq11}
\mathcal L^{2N}(A_1)\leq \frac{2\kappa_N}{\lambda^p(1-\sigma)^p}\|u_R\|_{L^p(\mathbb R^N)}^p+2(\kappa_NR^N)^2.
\end{equation}
For $A_2$, by using \eqref{eqeq5'} for $v_R$, we obtain
\begin{equation}
\mathcal L^{2N}(A_2)\leq\frac{2^{p+1}\kappa_N}{(\lambda\sigma)^{p}}\|v_R\|^p_{L^p(\mathbb R^N)}.\label{eqeqeq9}
\end{equation}
Combining \eqref{eqeqeq17'}, \eqref{eqeqeq11} and \eqref{eqeqeq9}, and multiplying by $\lambda^p$, we obtain
\begin{equation}\label{eqeqeq15}
\lambda^p\mathcal L^{2N}(E_{\lambda})\leq \frac{2\kappa_N}{(1-\sigma)^p}\|u_R\|_{L^p(\mathbb R^N)}^p+2\lambda^p(\kappa_NR^N)^2+\frac{2^{p+1}\kappa_N}{\sigma^p}\|v_R\|_{L^p(\mathbb R^N)}^p.
\end{equation}
We now first let $\lambda\rightarrow 0^+$, then let $R\rightarrow\infty$ and finally let $\sigma\rightarrow 0^+$. Since
\begin{equation}%
\lim_{R \rightarrow \infty} \|u_R\|_{L^p(\R^N)} = \|u\|_{L^p(\R^N)} \quad \text{and} \quad \lim_{R \rightarrow \infty} \|v_R\|_{L^p(\R^N)} = 0,
\end{equation}%
we obtain
\begin{equation}\label{eqeqeq16}
\limsup_{\lambda\rightarrow0^+}\lambda^p\mathcal L^{2N}(E_{\lambda})\leq 2\kappa_N\|u\|_{L^p(\mathbb R^N)}^p.
\end{equation}

Similarly, for any $\sigma > 0$, we have
\begin{equation}%
E_{\lambda}=\left\{(x,y) \in \R^N \times \R^N \colon \frac{|u(x)-u(y)|}{|x-y|^{N/p}}\geq\lambda\right\}\supseteq A_3 \setminus A_2
\end{equation}%
where
\begin{equation}
A_3 := \left\{(x,y) \in \R^N \times \R^N \colon \frac{|u_R(x)-u_R(y)|}{|x-y|^{N/p}}\geq\lambda(1+\sigma)\right\}
\end{equation}
and $A_2$ is as in \eqref{eq:A2}. Hence
\begin{equation}\label{eqeqeq17}
\mathcal L^{2N}(E_\lambda)\geq \mathcal L^{2N}(A_3)-\mathcal L^{2N}(A_2).
\end{equation}
Since $u_R$ is compactly supported in $B_R$, by \eqref{eqeqeq3} with $\lambda$ replaced by $\lambda(1+\sigma)$, we have
\begin{equation}\label{eqeqeq18}
\mathcal L^{2N}(A_3)\geq \frac{2\kappa_N}{\lambda^p(1+\sigma)^p}\|u_R\|_{L^p(\mathbb R^N)}^p-2(\kappa_NR^N)^2.
\end{equation}
Combining \eqref{eqeqeq17}, \eqref{eqeqeq18} and \eqref{eqeqeq9}, and multiplying by $\lambda^p$, we obtain
\begin{equation}\label{eqeqeq19}
\lambda^p\mathcal L^{2N}(E_{\lambda})\geq \frac{2\kappa_N}{(1+\sigma)^p}\|u_R\|_{L^p(\mathbb R^N)}^p-2\lambda^p(\kappa_NR^N)^2-\frac{2^{p+1}\kappa_N}{\sigma^p}\|v_R\|_{L^p(\mathbb R^N)}^p.
\end{equation}
We now first let $\lambda\rightarrow 0^+$, then let $R \rightarrow\infty$ and finally let $\sigma \rightarrow 0^+$. We obtain
\begin{equation}\label{eqeqeq20}
\liminf_{\lambda\rightarrow0^+}\lambda^p\mathcal L^{2N}(E_{\lambda})\geq 2\kappa_N\|u\|_{L^p(\mathbb R^N)}^p.
\end{equation}
\eqref{eq7'} then follows from \eqref{eqeqeq16} and \eqref{eqeqeq20}.
\end{proof}

\section{Embeddings of homogeneous fractional Triebel-Lizorkin spaces}\label{sec3}

In this section, we first prove Theorem~\ref{th1.2}. Its proof is similar to that of Theorem~\ref{thm:1.2'}, in that it also proceeds via complex interpolation, for the holomorphic family of linear operators $\{T_z\}$ defined in \eqref{eq:Tzdef}. On the other hand, it is not clear whether $T_1$ maps $\dot{F}^{1}_{p,2}(\R^N)$ to $L^{p,\infty}(\R^N \times \R^N)$. Thus we provide a more careful proof below, explaining why interpolation works.

First, we recall the subspace $\mathcal{F}^{-1} [C^{\infty}_c(\R^N \setminus \{0\})]$ which consists of Schwartz functions on $\R^N$ and is dense in $\dot{F}^{s}_{p,2}(\R^N)$ when $s \in (0,1)$, $p \in (1,\infty)$. For $u \in \mathcal{F}^{-1} [C^{\infty}_c(\R^N \setminus \{0\})]$, say $u = \mathcal{F}^{-1} \widehat{u}$ and $\widehat{u} \in C^{\infty}_c(\R^N \setminus \{0\})$, we may define complex powers of Laplacian:
\begin{equation} \label{eq:Dpow}
(-\Delta)^z u(x) := \int_{\R^N} (2 \pi |\xi|)^{2z} \widehat{u}(\xi) e^{2 \pi i x \cdot \xi} d\xi, \quad z \in \mathbb{C}.
\end{equation}
For every fixed $x \in \R^N$, this defines an entire function of $z \in \mathbb{C}$. Furthermore, for every fixed $z \in \mathbb{C}$, this defines a function in $\mathcal{F}^{-1} [C^{\infty}_c(\R^N \setminus \{0\})] \subset \mathcal{S}(\R^N)$.

\begin{lemma}\label{thm3.2}
For every $N \in \N$ and $1 < p < \infty$, there exists a constant $A=A(p,N)$ such that for any $u \in \mathcal{F}^{-1} [C^{\infty}_c(\R^N \setminus \{0\})]$ and any $s \in (0,1)$, the following estimates hold.
\begin{enumerate}[(a)]
\item For $z \in \mathbb{C}$ with $\text{Re}\, z = 0$, we have
\begin{equation}\label{eq:Rez=0}
\|(-\Delta)^{(s-z)/2} u\|_{L^p(\R^N)}\leq A(1+|\text{Im}\, z|)^{N+1} \|u\|_{\dot{F}^{s}_{p,2}(\R^N)}.
\end{equation}
\item For $z \in \mathbb{C}$ with $\text{Re}\, z = 1$, we have
\begin{equation}\label{eq:Rez=1}
\|\nabla(-\Delta)^{(s-z)/2} u\|_{L^p(\R^N)}\leq A(1+|\text{Im}\, z|)^{N+1} \|u\|_{\dot{F}^{s}_{p,2}(\R^N)}.
\end{equation}
\end{enumerate}
\end{lemma}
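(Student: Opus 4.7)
My plan is to view both $(-\Delta)^{(s-z)/2}$ and $\nabla(-\Delta)^{(s-z)/2}$ as Fourier multipliers, use the Littlewood--Paley square-function characterization of $L^p(\R^N)$ on the output, and then reduce everything to a vector-valued Mikhlin--H\"ormander estimate on each dyadic frequency annulus. The critical point to keep track of is that the constant should grow only \emph{polynomially} (of order $N+1$) in $|\text{Im}\,z|$, since this is precisely what a three-lines-theorem argument in the proof of Theorem~\ref{th1.2} will demand.

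Concretely, I fix an auxiliary bump $\tilde\psi \in C^\infty_c(\R^N\setminus\{0\})$ with $\tilde\psi\equiv 1$ on $\{1/2\leq|\xi|\leq 2\}$, so that $\tilde\psi(2^{-j}\xi)\equiv 1$ on $\mathrm{supp}\,\psi_j$, and introduce the localized symbols
\begin{equation*}
m^{(a)}_{j,z}(\xi) := 2^{-js}(2\pi|\xi|)^{s-z}\,\tilde\psi(2^{-j}\xi), \qquad m^{(b)}_{j,z,k}(\xi) := 2^{-js}(2\pi i \xi_k)(2\pi|\xi|)^{s-z}\,\tilde\psi(2^{-j}\xi).
\end{equation*}
By construction one has the Fourier multiplier identities
\begin{equation*}
\Delta_j\bigl[(-\Delta)^{(s-z)/2} u\bigr] = 2^{js}\,m^{(a)}_{j,z}(D)\,\Delta_j u, \qquad \Delta_j\bigl[\partial_k (-\Delta)^{(s-z)/2} u\bigr] = 2^{js}\,m^{(b)}_{j,z,k}(D)\,\Delta_j u.
\end{equation*}
A direct computation using $|\partial^\alpha[(2\pi|\xi|)^{s-z}]| \lesssim_\alpha (1+|z|)^{|\alpha|}|\xi|^{\text{Re}(s-z)-|\alpha|}$ (each $\xi$-derivative pulls down a linear factor of $(s-z)$), combined with the Leibniz rule applied to $\tilde\psi(2^{-j}\xi)$ whose $\beta$-th derivative is $O(2^{-j|\beta|})$, yields the uniform Mikhlin-type bound
\begin{equation*}
\bigl|\partial^\alpha m^{(a)}_{j,z}(\xi)\bigr|,\ \bigl|\partial^\alpha m^{(b)}_{j,z,k}(\xi)\bigr| \leq C_\alpha (1+|\text{Im}\,z|)^{|\alpha|}\, |\xi|^{-|\alpha|}, \qquad |\alpha|\leq N+1,
\end{equation*}
uniformly in $j\in\Z$. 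Here case (a) uses $\text{Re}\,z=0$, while case (b) uses $\text{Re}\,z=1$ together with the observation that the extra frequency factor $\xi_k$ is precisely compensated by $\text{Re}(s-z) = s-1$.

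Given these bounds, I apply the vector-valued Mikhlin--H\"ormander multiplier theorem (see e.g.\ Grafakos, \emph{Classical Fourier Analysis}, Ch.~5, or Stein, \emph{Singular Integrals}, Ch.~II) to conclude that for $1<p<\infty$,
\begin{equation*}
\Bigl\|\bigl(\textstyle\sum_{j\in\Z} |m_{j,z}(D) g_j|^2\bigr)^{1/2}\Bigr\|_{L^p(\R^N)} \leq C_{p,N}\,(1+|\text{Im}\,z|)^{N+1}\,\Bigl\|\bigl(\textstyle\sum_{j\in\Z} |g_j|^2\bigr)^{1/2}\Bigr\|_{L^p(\R^N)}.
\end{equation*}
Taking $g_j = 2^{js}\Delta_j u$ and combining with the Littlewood--Paley $L^p$ square-function estimate on the left (and summing over $k=1,\dots,N$ in case (b)) yields both (a) and (b). The sums are finite because $\widehat{u}$ is compactly supported in $\R^N\setminus\{0\}$, so convergence is trivial.

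The only real obstacle is bookkeeping: one must (i) localize each symbol to the correct dyadic annulus via $\tilde\psi(2^{-j}\xi)$ so that its derivatives decay with the correct homogeneity, (ii) carefully track that each $\xi$-derivative of $(2\pi|\xi|)^{s-z}$ produces a linear factor of $(s-z)$, which gives the final $(1+|\text{Im}\,z|)^{N+1}$ growth, and (iii) verify in part (b) that the gradient's extra frequency factor $\xi_k$ is absorbed precisely by the choice $\text{Re}\,z=1$. Each ingredient is standard; the polynomial-in-$|\text{Im}\,z|$ control is the essential take-away that makes the subsequent complex interpolation of the family $\{T_z\}$ go through.
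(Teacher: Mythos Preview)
Your proposal is correct and follows essentially the same route as the paper: both introduce the fattened bump $\tilde\psi$ so that $\Delta_j(-\Delta)^{(s-z)/2}u = (2^{js}\Delta_j u)*K_j$ with $K_j=\mathcal{F}^{-1}m^{(a)}_{j,z}$, derive the same uniform-in-$j$ symbol estimate $|\partial^\alpha m_{j,z}(\xi)|\lesssim (1+|t|)^{|\alpha|}|\xi|^{-|\alpha|}$, and then invoke vector-valued Calder\'on--Zygmund/Mikhlin theory on the diagonal operator $(g_j)\mapsto(m_{j,z}(D)g_j)$ together with the Littlewood--Paley inequality. The only cosmetic differences are that the paper records the estimate at the level of the kernel bound $|\nabla K_j(x)|\lesssim_N (1+|t|)^{N+1}|x|^{-(N+1)}$ before citing Stein, and handles (b) by a one-line appeal to the Riesz transform rather than folding $\xi_k$ into the symbol as you do.
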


\begin{proof}
The proof is a standard application of the theory of singular integrals. We verify the $z$-dependence of the constants in \eqref{eq:Rez=0} and \eqref{eq:Rez=1} by providing the necessary details below.

We first prove (a). Let $z \in \mathbb{C}$ with $\text{Re}\, z = 0$. Write $z = it$ for some $t \in \R$. Let $\{\tilde{\Delta}_j\}_{j \in \Z}$ be another family of Littlewood-Paley projections, given by
\begin{equation}%
\tilde{\Delta}_j u(x) := u * \mathcal{F}^{-1} \tilde{\psi}_j(x),
\end{equation}%
where $\tilde{\psi}_j(\xi) := \tilde{\psi}(2^{-j} \xi)$ for some $C^{\infty}$ function supported on $\{1/4 \leq |\xi| \leq 4\}$, so that $\tilde{\psi}(\xi) = 1$ on the support of $\psi$; this gives $\tilde{\Delta}_j \Delta_j = \Delta_j$ for all $j \in \Z$. As a result, for $u \in \mathcal{F}^{-1} [C^{\infty}_c(\R^N \setminus \{0\})]$, $j \in \Z$ and $s \in (0,1)$, we have
\begin{equation} \label{eq:reproduce}
\Delta_j (-\Delta)^{(s-z)/2} u = (2^{js} \Delta_j u)*K_j
\end{equation}
where $K_j := \mathcal{F}^{-1} [2^{-js} (2 \pi |\xi|)^{s- it} \tilde{\psi}_j]$ satisfies
\begin{equation}\label{eq3.5}
\sup_{j \in \Z} |\nabla K_j(x)| \lesssim_N (1+|t|)^{N+1} |x|^{-(N+1)}.
\end{equation}
(This is because for any multiindices $\alpha$, one has
\begin{equation}\label{eq3.4}
|\partial_{\xi}^{\alpha} [2^{-js} {(2 \pi |\xi|)}^{s- it} \tilde{\psi}_j(\xi)]| \lesssim_{\alpha} (1+|t|)^{|\alpha|} 2^{-j|\alpha|} \chi_{|\xi| \simeq 2^j}
\end{equation}
with implicit constant independent of $j \in \Z$, $s \in (0,1)$ and $t \in \R$; we may apply this with $|\alpha| = N$ and $N+1$ to bound $|x|^{N+1} |\nabla K_j(x)|$ in $L^{\infty}(\R^N)$.)
\begin{comment}
Indeed, we just note that
\begin{equation}\label{eq3.3}
\begin{split}
|2 \pi x|^{N+1} |\nabla K_j(x)|
& \leq \sum_{|\alpha| = N+1} \left| \int_{\R^N} \partial_{\xi}^{\alpha} [(2 \pi i \xi) 2^{-js} |\xi|^{s-it} \tilde{p}_j(\xi)] e^{2 \pi i x \cdot \xi} d\xi \right| \\
& \lesssim \|\partial_{\xi}^{N+1} [\xi 2^{-js} |\xi|^{s-it} \tilde{p}_j(\xi)]\|_{L^1(\R^N)},
\end{split}
\end{equation}
where we abused notation and wrote $\partial_{\xi}^{N+1}$ for any $\partial_{\xi}^{\alpha}$ with $|\alpha| = N+1$; then observe
\begin{equation}\label{eq3.6}
\partial_{\xi}^{N+1} [\xi 2^{-js} |\xi|^{s-it} \tilde{p}_j(\xi)]=\xi\partial_{\xi}^{N+1} [2^{-js} |\xi|^{s-it} \tilde{p}_j(\xi)]+\partial_{\xi}^{N} [2^{-js} |\xi|^{s-it} \tilde{p}_j(\xi)],
\end{equation}
by using \eqref{eq3.4} with $|\alpha|=N+1$ and $|\alpha|=N$ for the first term and the second term in \eqref{eq3.6} separately, we obtain
\begin{equation}%
\begin{split}
|\partial_{\xi}^{N+1} [\xi 2^{-js} |\xi|^{s-it} \tilde{p}_j(\xi)]|
&\leq |\xi| |\partial_{\xi}^{N+1} [2^{-js} |\xi|^{s-it} \tilde{p}_j(\xi)]| + |\partial_{\xi}^{N} [2^{-js} |\xi|^{s-it} \tilde{p}_j(\xi)]| \\
&\lesssim 2^j (1+|t|)^{N+1} 2^{-j(N+1)} \chi_{|\xi| \simeq 2^j} + (1+|t|)^{N} 2^{-jN} \chi_{|\xi| \simeq 2^j} \\
&\lesssim (1+|t|)^{N+1} 2^{-jN} \chi_{|\xi| \simeq 2^j}
\end{split}
\end{equation}%
which has $L^1$ norm $\lesssim (1+|t|)^{N+1}$, this combined with \eqref{eq3.3} implies \eqref{eq3.5}.
\end{comment}
We may now apply a vector-valued singular integral theorem to the operator
\begin{equation} \label{eq:vs}
(f_j(x))_{j \in \Z} \mapsto (f_j*K_j(x))_{j \in \Z},
\end{equation}
which is clearly bounded on $L^2(\ell^2)$ with norm $\lesssim 1$; by \cite[Chapter II, Theorem 5]{S70}, or \cite[Chapter I.6.4]{S93}, this operator is also bounded on $L^p(\ell^2)$ for all $1 < p < \infty$, with operator norm $\lesssim_{p,N} (1+|t|)^{N+1}$. Combined with the Littlewood-Paley inequality (which holds because $(-\Delta)^{(s-z)/2} u \in L^p(\R^N)$), we now have
\begin{equation}%
\begin{split}
\|(-\Delta)^{(s-z)/2} u\|_{L^p(\R^N)}
&\simeq_{p,N} \Big\| \Big( \sum_{j \in \Z} |\Delta_j (-\Delta)^{(s-z)/2} u|^2 \Big)^{1/2} \Big\|_{L^p(\R^N)} \\
&\lesssim_{p,N} (1+|t|)^{N+1} \Big\| \Big( \sum_{j \in \Z} |2^{js} \Delta_j u|^2 \Big)^{1/2} \Big\|_{L^p(\R^N)} \\
&= A(p,N) (1+|t|)^{N+1} \|u\|_{\dot{F}^{s}_{p,2}(\R^N)}
\end{split}
\end{equation}%
the middle inequality following from \eqref{eq:reproduce} and the boundedness of the operator in \eqref{eq:vs} on $L^p(\ell^2)$. This completes the proof of (a).

%\medskip

To deduce (b), one can either appeal to the boundedness of the Riesz transform $\nabla (-\Delta)^{-1/2}$ on $L^p(\R^N)$, or repeat the argument above. We omit the details.
\end{proof}

Furthermore, we will need to consider, for $1 < p < \infty$, the Lorentz space $L^{p,1}(\R^N \times \R^N)$, which is defined to be the set of all measurable functions $g(x,y)$ on $\R^N \times \R^N$ for which
\begin{equation}%
[ g ]_{L^{p,1}(\R^N \times \R^N)} := p \int_0^{\infty} \mathcal{L}^{2N}(\{(x,y) \in \R^N \times \R^N \colon |g(x,y)| \geq \lambda \})^{1/p} d\lambda < \infty.
\end{equation}

Just like $[\cdot]_{L^{p,\infty}}$, the quantity $[ \cdot ]_{L^{p,1}}$ is not a norm because it does not satisfy the triangle inequality; it is only a quasi-norm. Nevertheless, for $1 < p < \infty$, both $L^{p,1}$ and $L^{p,\infty}$ admit a comparable norm, which make them Banach spaces, and $L^{p,\infty}$ is the dual space of $L^{p',1}$ whenever $1/p + 1/p' = 1$: in fact, the easiest way to norm $L^{p',1}$ is to define
\begin{equation}%
\|g\|_{L^{p',1}(\R^N \times \R^N)} := \sup \left\{ \left| \int_{\R^N \times \R^N} g(x,y) G(x,y) dx dy \right| \colon [G]_{L^{p,\infty}(\R^N \times \R^N)} = 1 \right\}.
\end{equation}
If $p \in (1,\infty)$, every $g \in L^{p',1}(\R^N \times \R^N)$ can be approximated in the $L^{p',1}$ norm by functions in $L^{p',1}(\R^N \times \R^N)$ that are compactly supported in the open set $\{(x,y) \in \R^N \times \R^N \colon x \ne y\}$ (because such approximation is possible in the comparable $L^{p',1}(\R^N \times \R^N)$ quasi-norm by the dominated convergence theorem). We are now ready to prove Theorem~\ref{th1.2}.

\begin{proof}[Proof of Theorem~\ref{th1.2}.] We fix $s \in (0,1)$, $p \in (1,\infty)$, $u \in \mathcal{F}^{-1} [C^{\infty}_c(\R^N \setminus \{0\})]$, and $g \in L^{p',1}(\R^N \times \R^N)$ with compact support in $\{(x,y) \in \R^N \times \R^N \colon x \ne y\}$. Consider the function
\begin{equation}%
H(z) = \int_{\R^N \times \R^N} g(x,y) \frac{(-\Delta)^{(s-z)/2}u(x)- (-\Delta)^{(s-z)/2}u(y)}{|x-y|^{\frac{N}{p}+z}} dx dy.
\end{equation}%
This is an entire function of $z$, and we claim that it is a bounded function on the strip $\{z \in \mathbb{C} \colon 0 \leq \text{Re}\, z \leq 1\}$. Indeed, for $u \in \mathcal{F}^{-1} [C^{\infty}_c(\R^N \setminus \{0\})]$, \eqref{eq:Dpow} gives
\begin{equation}%
(-\Delta)^{(s-z)/2} u(x) = \int_{\R^N} (2\pi |\xi|)^{s-z} \widehat{u}(\xi) e^{2\pi i x \cdot \xi} d\xi \quad \text{for all $z \in \mathbb{C}$},
\end{equation}%
so
\begin{equation}%
\begin{split}
| (-\Delta)^{(s-z)/2}u(x)- (-\Delta)^{(s-z)/2}u(y) |
&\leq 2 \|(-\Delta)^{(s-z)/2} u\|_{L^{\infty}(\R^N)} \\
& \leq 2 \int_{\xi \in \textrm{supp} \, \widehat{u}} (2\pi |\xi|)^{s-\textrm{Re}\, z} |\widehat{u}(\xi)| d\xi \lesssim \exp(a_1|\textrm{Re}z|)
\end{split}
\end{equation}%
if $a_1 > 0$ is large enough so that $\max\{2\pi |\xi|,\frac{1}{2\pi |\xi|}\} \leq \exp(a_1)$ for all $\xi \in \textrm{supp} \, \widehat{u}$.
Also, on the support of $g(x,y)$, we have
\begin{equation}%
\left| \frac{1}{|x-y|^{\frac{N}{p}+z}} \right| \lesssim \exp(a_2|\textrm{Re}z|)
\end{equation}%
if $a_2 > 0$ is large enough so that $\max\{|x-y|,|x-y|^{-1} \colon (x,y) \in \text{supp}\, g\} \leq \exp(a_2)$.
Finally, since $g \in L^{p',1}(\R^N \times \R^N)$ has compact support, it is in $L^1(\R^N \times \R^N)$ as well. So
\begin{equation}%
|H(z)| \lesssim \|g\|_{L^1} \exp(a|\textrm{Re}z|) \quad \text{for all $z \in \mathbb{C}$}
\end{equation}%
where $a = a_1 + a_2$. In particular, $H(z)$ is bounded on the strip $\{z \in \mathbb{C} \colon 0 \leq \textrm{Re}\, z \leq 1\}$, as claimed.

Furthermore, for $\text{Re}\, z = 0$, the upper bound in \eqref{eq5'}, together with \eqref{eq:Rez=0}, show that
\begin{equation}%
\begin{split}
|H(z)| &\leq \int_{\R^N \times \R^N} |g(x,y)| \frac{|(-\Delta)^{(s-z)/2}u(x)- (-\Delta)^{(s-z)/2}u(y)|}{|x-y|^{\frac{N}{p}}} dx dy \\
&\leq \|g\|_{L^{p',1}(\R^{2N})} \Big[\frac{(-\Delta)^{(s-z)/2}u(x)- (-\Delta)^{(s-z)/2}u(y)}{|x-y|^{\frac{N}{p}}} \Big]_{L^{p,\infty}(\R^{2N})} \\
&\leq 2 c_2^{1/p} \|g\|_{L^{p',1}(\R^{2N})} \|(-\Delta)^{(s-z)/2} u\|_{L^p(\R^N)} \\
&\leq 2 c_2^{1/p} A (1+|\text{Im}\, z|)^{N+1} \|g\|_{L^{p',1}(\R^{2N})} \|u\|_{\dot{F}^{s}_{p,2}(\R^N)}.
\end{split}
\end{equation}%
On the other hand, for $\text{Re}\, z = 1$, the upper bound in \eqref{eq5}, together with \eqref{eq:Rez=1}, show that
\begin{equation}%
\begin{split}
|H(z)| &\leq \int_{\R^N \times \R^N} |g(x,y)| \frac{|(-\Delta)^{(s-z)/2}u(x)- (-\Delta)^{(s-z)/2}u(y)|}{|x-y|^{\frac{N}{p}+1}} dx dy \\
&\leq \|g\|_{L^{p',1}(\R^{2N})} \Big[ \frac{(-\Delta)^{(s-z)/2}u(x)- (-\Delta)^{(s-z)/2}u(y)}{|x-y|^{\frac{N}{p}+1}} \Big]_{L^{p,\infty}(\R^{2N})} \\
&\leq C^{1/p} \|g\|_{L^{p',1}(\R^{2N})} \|\nabla (-\Delta)^{(s-z)/2} u\|_{L^p(\R^N)} \\
&\leq C^{1/p} A (1+|\text{Im}\, z|)^{N+1} \|g\|_{L^{p',1}(\R^{2N})} \|u\|_{\dot{F}^{s}_{p,2}(\R^N)}.
\end{split}
\end{equation}%
(The upper bound in \eqref{eq5} applies because $(-\Delta)^{(s-z)/2} u \in \mathcal{S}(\R^N) \subset W^{1,p}(\R^N)$ when $u \in \mathcal{F}^{-1} [C^{\infty}_c(\R^N \setminus \{0\})]$, allowing us to invoke \cite[Lemma 3.1]{Po}.)
This allows us to use the three lines lemma from complex analysis to the bounded holomorphic function $H(z)/(z+1)^{N+1}$ on the strip $\{z \in \mathbb{C} \colon 0 \leq \text{Re}\, z \leq 1\}$, and conclude that
\begin{equation}%
|H(s)| \lesssim_{p,N} \|g\|_{L^{p',1}(\R^{2N})} \|u\|_{\dot{F}^{s}_{p,2}(\R^N)}.
\end{equation}%
Taking supremum over $g$, we get
\begin{equation}%\label{eq:new}
\Big[\frac{u(x)-u(y)}{|x-y|^{\frac{N}{p}+s}} \Big]_{L^{p,\infty}(\R^{2N})} \leq C'  \|u\|_{\dot{F}^{s}_{p,2}(\R^N)}
\end{equation}
where $C' = C'(p,N)$, and this inequality holds for all $u \in \mathcal{F}^{-1} [C^{\infty}_c(\R^N \setminus \{0\})]$. This shows that the left-hand side may be defined by density for all $u \in \dot{F}^{s}_{p,2}(\R^N)$, and that the inequality continues to hold after such extension for all $u \in \dot{F}^{s}_{p,2}(\R^N)$.
\end{proof}

\begin{proof}[Proof of Proposition \ref{th1.3}]
We just note that for $u \in \mathcal{F}^{-1} [C^{\infty}_c(\R^N \setminus \{0\})]$, we have
%\begin{equation}%
%u(x) = \sum_{j \in \Z} \Delta_j u(x)
%\end{equation}%
%(the sum on the right vanishes for all but finitely many $j$'s), and
\begin{align}
& \quad \left\| \frac{u(x)-u(y)}{|x-y|^{\frac{N}{p}+s}} \right\|_{L^p(\R^{2N})} = \left\| \frac{u(x+z)-u(x)}{|z|^{\frac{N}{p}+s}} \right\|_{L^p(\R^{2N})} \\
&\lesssim_N \left( \sum_{k \in \Z} 2^{k s p} \sup_{|z| \simeq 2^{-k}} \left\|u(x+z)-u(x)\right\|_{L^p(dx)}^p \right)^{1/p} \\
&\lesssim_{N,p} \left( \sum_{k \in \Z} 2^{k s p} \sup_{|z| \simeq 2^{-k}} \int_{\R^N} \Big( \sum_{j \in \Z} | \Delta_{j+k} u(x+z) - \Delta_{j+k} u(x) |^2 \Big)^{p/2} dx \right)^{1/p},\label{7}
\end{align}
the last inequality following from Littlewood-Paley (note that the sum in $j$ has only finitely many non-zero terms). We consider two cases.

\noindent{\textbf{Case 1}: $1 < p \leq 2$}

In this case, we bound \eqref{7} using the inequality
\begin{equation}
|\sum_j F_j|^{p/2} \leq \sum_j |F_j|^{p/2}
\end{equation}
with $F_j := | \Delta_{j+k} u(x+z) - \Delta_{j+k} u(x) |^2$. We get
\begin{align}
\eqref{7}
&\lesssim \left( \sum_{k \in \Z}  2^{k s p} \sup_{|z| \simeq 2^{-k}} \int_{\R^N} \sum_{j \in \Z} | \Delta_{j+k} u(x+z) - \Delta_{j+k} u(x) |^p dx \right)^{1/p} \\
&\leq \left( \sum_{k \in \Z}  2^{k s p} \sum_{j \in \Z}  \sup_{|z| \simeq 2^{-k}} \| \Delta_{j+k} u(x+z) - \Delta_{j+k} u(x) \|_{L^p(dx)}^p \right)^{1/p}. \label{10}
\end{align}
Now if $|z| \simeq 2^{-k}$, we write
\begin{equation} \label{11}
\Delta_{j+k} u(x+z) - \Delta_{j+k} u(x) = \int_0^1 \frac{d}{dt} \Delta_{j+k} u(x + tz) dt
= \int_0^1 z \cdot \nabla \Delta_{j+k} u(x+tz) dt,
\end{equation}
so its $L^p$ norm with respect to $x$ is bounded by
\begin{equation}%
|z| \|\nabla \Delta_{j+k} u\|_{L^p(\R^N)} \lesssim 2^{-k} \|\nabla \Delta_{j+k} u\|_{L^p(\R^N)}  \lesssim_N 2^j \|\Delta_{j+k} u\|_{L^p(\R^N)}.
\end{equation}%
This shows
\begin{equation}%
\sup_{|z| \simeq 2^{-k}} \|\Delta_{j+k} u(x+z) - \Delta_{j+k} u(x)\|_{L^p(\dif x)} \lesssim_N 2^j\|\Delta_j u\|_{L^p(\R^N)}.
\end{equation}%
We also have the trivial bound
\begin{equation}%
\sup_{|z| \simeq 2^{-k}} \|\Delta_{j+k} u(x+z) - \Delta_{j+k} u(x)\|_{L^p(\dif x)} \leq 2 \|\Delta_j u\|_{L^p(\R^N)}.
\end{equation}%
Then combining these two estimate, we write
\begin{equation} \label{eq:Deltakdiff}
\sup_{|z| \simeq 2^{-k}} \|\Delta_{j+k} u(x+z) - \Delta_{j+k} u(x)\|_{L^p(\dif x)} \lesssim_N
\begin{cases}
\|\Delta_{j+k} u\|_{L^p(\R^N)} &\quad \text{if $j > 0$},\\
2^j \|\Delta_{j+k} u\|_{L^p(\R^N)} &\quad \text{if $j \leq 0$}.
\end{cases}
\end{equation}

Then by substituting \eqref{eq:Deltakdiff} into \eqref{10}, we obtain
\begin{align}
& \quad \left\| \frac{u(x)-u(y)}{|x-y|^{\frac{N}{p}+s}} \right\|_{L^p(\R^{2N})} \\
&\lesssim_N \Big[ \sum_{k \in \Z}  2^{k s p} \Big(  \sum_{j > 0} \|\Delta_{j+k} u\|_{L^p(\R^N)}^p + \sum_{j \leq 0} 2^{j p} \|\Delta_{j+k} u\|_{L^p(\R^N)}^p \Big) \Big]^{1/p} \\
&= \Big[  \sum_{j > 0} 2^{-j s p} \sum_{k \in \Z} 2^{(j+k) s p} \|\Delta_{j+k} u\|_{L^p(\R^N)}^p + \sum_{j \leq 0} 2^{j (1-s) p} \sum_{k \in \Z} 2^{(j+k) s p} \|\Delta_{j+k} u\|_{L^p(\R^N)}^p \Big]^{1/p} \\
&= \Big[  \sum_{j > 0} 2^{-j s p} + \sum_{j \leq 0} 2^{j (1-s) p} \Big]^{1/p} \|u\|_{\dot{F}^{s}_{p,p}(\R^N)} \\
&\simeq  \left[ \frac{1}{s(1-s)} \right]^{1/p} \|u\|_{\dot{F}^{s}_{p,p}(\R^N)}.
\end{align}

\noindent{\textbf{Case 2}: $2 \leq p < \infty$}

We apply Minkowski inequality for $L^{p/2}(\R^N)$ and obtain
\begin{align}
\eqref{7}
& \leq  \left( \sum_{k \in \Z} 2^{k s p} \sup_{|z| \simeq 2^{-k}}\Big( \sum_{j \in \Z}  \| \Delta_{j+k} u(x+z) - \Delta_{j+k} u(x) \|_{L^p(dx)}^2 \Big)^{p/2} \right)^{1/p}\\
& \leq  \left( \sum_{k \in \Z} 2^{k s p} \Big( \sum_{j \in \Z}  \sup_{|z| \simeq 2^{-k}} \| \Delta_{j+k} u(x+z) - \Delta_{j+k} u(x) \|_{L^p(dx)}^2 \Big)^{p/2} \right)^{1/p}. \label{22}
\end{align}
But by \eqref{eq:Deltakdiff}, we have
\begin{align}
\eqref{22}
&\leq \left( \sum_{k \in \Z} 2^{k s p} \Big( \sum_{j > 0}  \| \Delta_{j+k} u \|_{L^p(dx)}^2  + \sum_{j \leq 0} 2^{2 j} \| \Delta_{j+k} u \|_{L^p(dx)}^2 \Big)^{p/2} \right)^{1/p} \\
& = \left\| \sum_{j > 0} 2^{-2 j s} 2^{2 (j+k) s} \| \Delta_{j+k} u \|_{L^p(dx)}^2  + \sum_{j \leq 0} 2^{2 j (1-s)} 2^{2 (j+k) s} \| \Delta_{j+k} u \|_{L^p(dx)}^2 \right\|_{\ell^{p/2}_k}^{1/2}.
\end{align}
Applying the Minkowski inequality for $\ell^{p/2}$, we bound this by
\begin{align}
\Big (\sum_{j > 0} 2^{-2 j s} + \sum_{j \leq 0} 2^{2 j (1-s)} \Big)^{1/2} \|u\|_{\dot{F}^{s}_{p,p}(\R^N)}
\lesssim  \left[ \frac{1}{s(1-s)} \right]^{1/2}  \|u\|_{\dot{F}^{s}_{p,p}(\R^N)}.
\end{align}
\end{proof}

\section{Appendix: Density in Triebel-Lizorkin spaces}

In the proof of Theorem~\ref{th1.2}, we appealed to the case $s \in (0,1)$ and $q = 2$ of the following proposition. Thus we include a sketch of its proof.

\begin{proposition}
$\mathcal{F}^{-1} [C^{\infty}_c(\R^N \setminus \{0\})]$ is dense in $\dot{F}^{s}_{p,q}(\R^N)$ for $s \in \R$, $p \in (1,\infty)$ and $q \in (1,\infty)$.
\end{proposition}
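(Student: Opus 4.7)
The plan is a two-step approximation: first truncate the Littlewood--Paley decomposition of $u$, and then approximate each band-limited piece by a function whose Fourier transform lies in $C^{\infty}_c(\R^N \setminus \{0\})$.

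Given $u \in \dot{F}^{s}_{p,q}(\R^N)$, set $u_M := \sum_{|j| \leq M} \Delta_j u$. The first step is to show $u_M \to u$ in $\dot{F}^{s}_{p,q}$. Since
\[
\|u - u_M\|_{\dot{F}^{s}_{p,q}(\R^N)} = \Bignorm[L^p(\R^N)]{\Big( \sum_{|j|>M} |2^{js} \Delta_j u|^q \Big)^{1/q}}
\]
and the full sum $\bigl(\sum_{j \in \Z} |2^{js}\Delta_j u|^q\bigr)^{1/q}$ is in $L^p(\R^N)$ by hypothesis, dominated convergence inside the $L^p$ integral yields the claim as $M \to \infty$.

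The second step is to approximate each $u_M$ by elements of $\mathcal{F}^{-1}[C^{\infty}_c(\R^N \setminus \{0\})]$. Observe first that each $\Delta_j u$ is in $L^p(\R^N)$ (since $2^{js}\Delta_j u$ belongs to $L^p$ by the definition of the $\dot{F}^{s}_{p,q}$ norm), so $u_M \in L^p(\R^N)$; moreover, since $\widehat{u_M}$ is compactly supported in the annulus $A_M := \{2^{-M-1} \leq |\xi| \leq 2^{M+1}\}$, $u_M$ is smooth. Fix $\tilde\chi \in C^{\infty}_c(\R^N \setminus \{0\})$ equal to $1$ on a neighbourhood of $A_M$, and set $\eta := \mathcal{F}^{-1}\tilde\chi \in \mathcal{S}(\R^N)$, so that $\eta * u_M = u_M$. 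Pick $\phi \in C^{\infty}_c(\R^N)$ with $\phi \equiv 1$ near $0$, write $\phi_R(x) := \phi(x/R)$, and define
\[
w_{M,R} := \eta * (\phi_R u_M).
\]
Then $\widehat{w_{M,R}} = \tilde\chi \cdot \widehat{\phi_R u_M}$ is $C^{\infty}$ and supported in $\mathrm{supp}\,\tilde\chi \subset \R^N \setminus \{0\}$, so $w_{M,R} \in \mathcal{F}^{-1}[C^{\infty}_c(\R^N \setminus \{0\})]$. The identity $w_{M,R} - u_M = \eta * ((\phi_R - 1) u_M)$, combined with $(\phi_R - 1)u_M \to 0$ in $L^p(\R^N)$ (by dominated convergence) and Young's inequality, gives $w_{M,R} \to u_M$ in $L^p(\R^N)$ as $R \to \infty$.

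To upgrade from $L^p$ to $\dot{F}^{s}_{p,q}$ convergence, I would exploit that $w_{M,R} - u_M$ has Fourier support in the fixed compact set $\mathrm{supp}\,\tilde\chi \subset \R^N \setminus \{0\}$: only finitely many Littlewood--Paley blocks $\Delta_j$ act nontrivially on such bandlimited functions, and a standard Bernstein/reproducing-kernel argument yields $\|f\|_{\dot{F}^{s}_{p,q}} \lesssim_{M,s,p,q,\tilde\chi} \|f\|_{L^p}$ for every $f$ with $\mathrm{supp}\,\widehat{f} \subset \mathrm{supp}\,\tilde\chi$. Applying this bound to $f = w_{M,R} - u_M$ and then letting $R \to \infty$ and $M \to \infty$ finishes the proof. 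The main obstacle is the second step, namely arranging that a physical-space truncation $\phi_R u_M$ followed by frequency smoothing by $\eta$ preserves compact Fourier support strictly inside $\R^N \setminus \{0\}$ while still providing $L^p$ (hence $\dot{F}^{s}_{p,q}$) closeness to $u_M$; the Bernstein-type comparability of norms on band-limited functions is the technical heart of the argument.
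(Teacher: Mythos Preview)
Your proof is correct and follows the same two-step strategy as the paper: first truncate the Littlewood--Paley series by dominated convergence, then approximate the band-limited piece in $L^p$ and use the fixed compact Fourier support to upgrade to $\dot{F}^{s}_{p,q}$ convergence. The only difference is cosmetic: the paper multiplies $u_M$ directly by $\phi(\delta x)$ with $\widehat{\phi}$ supported in the unit ball (so for $\delta \ll 2^{-M}$ the product already has Fourier transform in $C^{\infty}_c(\R^N\setminus\{0\})$), whereas you use a physical-space $C^{\infty}_c$ cutoff and then re-project with $\eta$---one extra step, but the logic is identical.
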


\begin{proof}
Fix $s \in \R$, $p \in (1,\infty)$ and $q \in (1,\infty)$.
First, for $u \in \dot{F}^{s}_{p,q}(\R^N)$, since
\begin{equation}%
\lim_{J \rightarrow +\infty} \Big\| \Big( \sum_{|j| > J} |2^{js} \Delta_j u|^q \Big)^{1/q} \Big\|_{L^p(\R^N)} = 0,
\end{equation}%
we see that $u_J := \sum_{|j| \leq J} \Delta_j u$ converges in $\dot{F}^{s}_{p,q}(\R^N)$ as $J \rightarrow +\infty$.

Next, let $\phi \in \mathcal{S}(\R^N)$ with $\phi(0) = 1$ whose Fourier transform $\widehat\phi$ is compactly supported on the unit ball. For every fixed $J \in \N$, we let $u_{J, \delta}(x) := \phi(\delta x) u_J(x)$. Then for $\delta \ll 2^{-J}$, we have $u_{J,\delta} \in \mathcal{F}^{-1} [C^{\infty}_c(\R^N \setminus \{0\})]$. Thus it remains to show that $u_{J,\delta} \rightarrow u_J$ in $\dot{F}^{s}_{p,q}$ as $\delta \rightarrow 0$. To see this, note that $u_{J,\delta}$ is $C^{\infty}$ on $\R^N$, so $u_{J,\delta}$ converges pointwisely to $u_J$ as $\delta \rightarrow 0$. Furthermore, $u_{J,\delta}$ is dominated by a multiple of $u_J$, which is in $L^p(\R^N)$, so by the dominate convergence theorem,
\begin{equation}%
\lim_{\delta \rightarrow 0} \|u_{J,\delta} - u_J\|_{L^p(\R^N)} = 0.
\end{equation}%
As a result,
\begin{equation}%
\lim_{\delta \rightarrow 0} \|\Delta_j (u_{J,\delta} - u_J)\|_{L^p(\R^N)} = 0
\end{equation}%
for every $j \in \Z$, which implies the desired convergence of $u_{J,\delta}$ to $u_J$ in $\dot{F}^{s}_{p,q}$ as $\delta \rightarrow 0$.
\end{proof}

\begin{bibdiv}
\begin{biblist}

%\bib{BBM00}{article}{
%   author={Bourgain, Jean},
%   author={Brezis, Ha\"{\i}m},
%   author={Mironescu, Petru},
%   title={Lifting in Sobolev spaces},
%   journal={J. Anal. Math.},
%   volume={80},
%   date={2000},
%   pages={37--86},
%   issn={0021-7670},
%%   review={\MR{1771523}},
%%   doi={10.1007/BF02791533},
%}

\bib{BBM01}{article}{
   author={Bourgain, Jean},
   author={Brezis, Ha\"{\i}m},
   author={Mironescu, Petru},
   title={Another look at Sobolev spaces},
   conference={
      title={Optimal control and partial differential equations},
   },
   book={
      publisher={IOS, Amsterdam},
   },
   date={2001},
   pages={439--455},
%   review={\MR{3586796}},
}

\bib{BBM02}{article}{
   author={Bourgain, Jean},
   author={Brezis, Ha\"{\i}m},
   author={Mironescu, Petru},
   title={Limiting embedding theorems for $W^{s,p}$ when $s\uparrow1$ and
   applications},
   note={Dedicated to the memory of Thomas H. Wolff},
   journal={J. Anal. Math.},
   volume={87},
   date={2002},
   pages={77--101},
   issn={0021-7670},
%   review={\MR{1945278}},
%   doi={10.1007/BF02868470},
}

\bib{Bre}{article}{
   author={Brezis, Ha\"{\i}m},
   title={How to recognize constant functions. A connection with Sobolev
   spaces},
   language={Russian, with Russian summary},
   journal={Uspekhi Mat. Nauk},
   volume={57},
   date={2002},
   number={4(346)},
   pages={59--74},
   issn={0042-1316},
   translation={
      journal={Russian Math. Surveys},
      volume={57},
      date={2002},
      number={4},
      pages={693--708},
      issn={0036-0279},
   },
%   review={\MR{1942116}},
%   doi={10.1070/RM2002v057n04ABEH000533},
}

\bib{BVY}{article}{
   author={Brezis, Ha\"{\i}m},
   author={Van Schaftingen, Jean},
   author={Yung, Po-Lam},
   title={A surprising formula for Sobolev norms},
   journal={Proc. Natl. Acad. Sci.},
   volume={118},
   date={2021},
   number={8},
   pages={e2025254118},
}

\bib{CR}{book}{
   author={Castillo, Ren\'{e} Erl\'{\i}n},
   author={Rafeiro, Humberto},
   title={An introductory course in Lebesgue spaces},
   series={CMS Books in Mathematics/Ouvrages de Math\'{e}matiques de la SMC},
   publisher={Springer, [Cham]},
   date={2016},
   pages={xii+461},
   isbn={978-3-319-30032-0},
   isbn={978-3-319-30034-4},
%   review={\MR{3497415}},
%   doi={10.1007/978-3-319-30034-4},
}

\bib{Dav}{article}{
   author={D\'{a}vila, Juan},
   title={On an open question about functions of bounded variation},
   journal={Calc. Var. Partial Differential Equations},
   volume={15},
   date={2002},
   number={4},
   pages={519--527},
   issn={0944-2669},
%   review={\MR{1942130}},
%   doi={10.1007/s005260100135},
}

\bib{DPV}{article}{
   author={Di Nezza, Eleonora},
   author={Palatucci, Giampiero},
   author={Valdinoci, Enrico},
   title={Hitchhiker's guide to the fractional Sobolev spaces},
   journal={Bull. Sci. Math.},
   volume={136},
   date={2012},
   number={5},
   pages={521--573},
   issn={0007-4497},
%   review={\MR{2944369}},
%   doi={10.1016/j.bulsci.2011.12.004},
}

\bib{DM}{article}{
   author={Dominguez, Oscar},
   author={Milman, Mario},
   title={New Brezis-Van Schaftingen-Yung Sobolev type inequalities connected with maximal inequalities and one parameter families of operators},
   journal={arXiv:2010.15873},
}

\bib{G}{book}{
   author={Grafakos, Loukas},
   title={Classical Fourier analysis},
   series={Graduate Texts in Mathematics},
   volume={249},
   edition={3},
   publisher={Springer, New York},
   date={2014},
   pages={xviii+638},
   isbn={978-1-4939-1193-6},
   isbn={978-1-4939-1194-3},
%   review={\MR{3243734}},
%   doi={10.1007/978-1-4939-1194-3},
}

\bib{Maz}{book}{
   author={Maz\cprime ya, Vladimir},
   title={Sobolev spaces with applications to elliptic partial differential
   equations},
   series={Grundlehren der Mathematischen Wissenschaften [Fundamental
   Principles of Mathematical Sciences]},
   volume={342},
   edition={Second, revised and augmented edition},
   publisher={Springer, Heidelberg},
   date={2011},
   pages={xxviii+866},
   isbn={978-3-642-15563-5},
%   review={\MR{2777530}},
%   doi={10.1007/978-3-642-15564-2},
}

\bib{Po}{article}{
   author={Poliakovsky, Arkady},
   title={Some remarks on a formula for Sobolev norms due to Brezis, Van Schaftingen and Yung},
   journal={arXiv:2102.00557},
}

\bib{MSh}{article}{
   author={Maz\cprime ya, Vladimir},
   author={Shaposhnikova,  Tatyana},
   title={On the Bourgain, Brezis, and Mironescu theorem concerning limiting
   embeddings of fractional Sobolev spaces},
   journal={J. Funct. Anal.},
   volume={195},
   date={2002},
   number={2},
   pages={230--238},
   issn={0022-1236},
%   review={\MR{1940355}},
%   doi={10.1006/jfan.2002.3955},
}

\bib{S70}{book}{
   author={Stein, Elias M.},
   title={Singular integrals and differentiability properties of functions},
   series={Princeton Mathematical Series, No. 30},
   publisher={Princeton University Press, Princeton, N.J.},
   date={1970},
   pages={xiv+290},
%   review={\MR{0290095}},
}

\bib{S93}{book}{
   author={Stein, Elias M.},
   title={Harmonic analysis: real-variable methods, orthogonality, and
   oscillatory integrals},
   series={Princeton Mathematical Series},
   volume={43},
   note={With the assistance of Timothy S. Murphy;
   Monographs in Harmonic Analysis, III},
   publisher={Princeton University Press, Princeton, NJ},
   date={1993},
   pages={xiv+695},
   isbn={0-691-03216-5},
%   review={\MR{1232192}},
}
	
\bib{Triebel}{book}{
   author={Triebel, Hans},
   title={Theory of function spaces},
   series={Modern Birkh\"{a}user Classics},
   note={Reprint of 1983 edition [MR0730762];
   Also published in 1983 by Birkh\"{a}user Verlag [MR0781540]},
   publisher={Birkh\"{a}user/Springer Basel AG, Basel},
   date={2010},
   pages={285},
   isbn={978-3-0346-0415-4},
   isbn={978-3-0346-0416-1},
%   review={\MR{3024598}},
}
\end{biblist}
\end{bibdiv}

\end{document}